\definecolor{wineRed}{rgb}{0.7,0,0.3}
\definecolor{grandBleu}{rgb}{0,0,0.8}
\definecolor{darkGreen}{rgb}{0,0.4,0}
\definecolor{blueViolet}{rgb}{0.4,0,1.0}
\definecolor{bloodOrange}{rgb}{0.85,0.05,0}
\definecolor{mycolor}{rgb}{0.8,0,0.2}
\DeclareMathAlphabet{\mathpzc}{OT1}{pzc}{m}{it}
\numberwithin{equation}{section}
\theoremstyle{plain}
\newtheorem{mainThm}{Main Theorem} 
\newtheorem{lemma}{Lemma}[section]
\newtheorem{theorem}{Theorem}%[section]
\theoremstyle{definition}
\newtheorem{definition}{Definition}%[section]
\newtheorem{rem}{Remark}
\newtheorem{notn}{Notation}
\newtheorem{ex}{Example}
\def\N{\mathbb{N}}
\def\R{\mathbb{R}}
\def\ts{\textstyle}
\begin{document}
\label{page:t}
\thispagestyle{plain}

\title{Elliptic and Pseudo-Parabolic PDE System with Orientation-Adaptive Anisotropy
%\footnotemark[1]
\vspace{-4ex}
}
%%%
%\dedication{A line for dedication}
%%
\author{Naotaka Ukai
\footnotemark[1]
}
\affiliation{Division of Mathematics and Informatics, 
\\
Graduate School of Science and Engineering, Chiba University, 
\\
1-33, Yayoi-cho, Inage-ku, 263-8522, Chiba, Japan}
\email{24wd0101@student.gs.chiba-u.jp}

\footcomment{
%$^*$\,This work is supported by  Grant-in-Aid for Scientific Research (C) No. 20K03672, JSPS, and JST SPRING No. JPMJSP2109. {The first and third authors are also partially supported by NSF grant DMS-2408877, Air Force Office of Scientific Research (AFOSR) under Award NO: FA9550-22-1-0248, and Office of Naval Research (ONR) under Award NO: N00014-24-1-2147.}
%\\
AMS Subject Classification: 
35K70, % Ultraparabolic equations, pseudoparabolic equations, etc.
35K59, %Quasilinear parabolic equations
35K61, %Nonlinear initial, boundary and initial-boundary value problems for nonlinear parabolic equations
%49J20, % Existence theories for optimal control problems involving partial differential equations
%49K20, %Optimality conditions for problems involving partial differential equations
%74N05, % Crystals in solids
%74N20. % Dynamics of phase boundaries in solids
35J62.%  	Quasilinear elliptic equations
\\
Keywords: elliptic equation, pseudo-parabolic equation, orientation-adaptive anisotropy, nonconvex energy functional, well-posedness, time-discretization
}
\footnotetext[1]{This work was supported by JST SPRING, Grant Number JPMJSP2109.}
%\footnotetext[2]{bbb}
\maketitle
\vspace{-2ex}

\noindent
{\bf Abstract.}
In this paper, we consider a coupled system of nonlinear elliptic and pseudo-parabolic PDEs arising in anisotropic monochrome image denoising with orientation-adaptation. The system is derived from the minimization process of a nonconvex energy functional. In particular, we focus on the problem of determining the initial data for the orientation variable. In previous studies, a natural procedure for determining such initial data has not been sufficiently clarified. To address this issue, we introduce a formulation in which the time derivative of the orientation variable is removed. This formulation enables the initial orientation data to be determined implicitly within a time-discrete scheme. On the other hand, this formulation weakens the conventional energy-dissipation structure and leads to new difficulties in constructing a stable variational time-evolution process. To overcome this issue, we develop an analysis based on a time-discretization method and establish the well-posedness of the proposed system, namely existence, uniqueness, and continuous dependence, as well as an energy-inequality. We also show that the proposed time-discrete scheme determines the initial orientation data consistently with the continuous model. These results provide a mathematical framework for the initial-orientation determination problem in orientation-adaptive anisotropic models.

\section{Introduction}
Let $ 0 < T < \infty $ be a fixed constant of time, and $\Omega \subset \mathbb{R}^2$ be a bounded domain with Lipschitz boundary $\Gamma := \partial \Omega$. We denote by $Q := (0, T) \times \Omega$ the space-time cylindrical domain, and by $\Sigma := (0, T) \times \Gamma$ its lateral boundary.

In this paper, we consider a coupled system of nonlinear elliptic and pseudo-parabolic partial differential equations with orientation-adaptive anisotropy. The main issue in this paper is the mathematical analysis of the initial-value determination problem for the orientation variable.

Previous studies \cite{AMSU202411, berkels2006cartoon} introduced anisotropic models involving an additional unknown variable $\alpha$ in order to describe local orientations of structures in images. In these models, another unknown function $u$ represents the gray-scale intensity distribution of a monochrome image, while the anisotropy function $\gamma$ and the orientation variable $\alpha$ describe anisotropic diffusion adapted to polygonal structures.

The system studied in this paper is derived from the minimization process of the following nonconvex energy functional:
\begin{gather*}
    E: [\alpha,u] \in H_0^1(\Omega)\times W^{1, p}_0(\Omega) \mapsto E(\alpha,u) := \frac{\kappa}{2} \int_\Omega |\nabla \alpha|^2 \, dx +\frac{\nu}{p}\int_\Omega |\nabla u|^p \, dx 
    \nonumber
    \\
+\int_\Omega \gamma(R(\alpha) \nabla u) \, dx +\frac{\lambda}{2}\int_\Omega |u -u_\mathrm{org}|^2 \, dx \in [0, \infty).
    %\label{energy01}
\end{gather*}
In this context, $\kappa,\mu,\nu,\lambda$ are fixed positive constants, and $p>2$ is a fixed exponent. Moreover, $\gamma$ denotes a smooth non-Euclidean norm representing anisotropy, and $R(\alpha)$ denotes the rotation matrix with the angle $ \alpha $. $ u_\mathrm{org} \in L^2(\Omega) $ is a fixed function. 

Since the energy $ E = E(\alpha, u) $ is a nonconvex functional, uniqueness of minimizers is not generally guaranteed. Therefore, the choice of initial data plays an important role in the corresponding minimization process. For the image variable $u$, it is natural to use the given image data $u_{\mathrm{org}}$ as the initial value. In contrast, for the orientation variable $\alpha$, it is not easy to prepare reliable initial data. However, previous studies have not provided a sufficiently natural and explicit procedure for determining the initial orientation data.

As a mathematical framework for this issue, we introduce a formulation in which the time derivative term of $\alpha$ is removed. More precisely, we consider the following system obtained from the variational system proposed in \cite{AMSU202411} by removing the time-derivative of $\alpha$:
\begin{align*}
    \mbox{(S):} ~~~~&
    \nonumber
    \\
    & 
    \begin{cases}%\label{alpha00001}
        -\kappa \mathit{\Delta} \alpha +\nabla \gamma(R(\alpha)\nabla u) \cdot R(\alpha +{\ts \frac{\pi}{2}}) \nabla u \ni 0 \mbox{ in $ Q $,}
        \\[1ex]
        \alpha  =  0 \mbox{ on $ \Sigma $,}
    \end{cases}
    \\
    & 
    \begin{cases}%\label{u00001}
        \partial_t u -\mathrm{div} \bigl( \, {^\top R} (\alpha) \nabla \gamma(R(\alpha) \nabla u) +\nu |\nabla u|^{p -2} \nabla u +\mu \nabla \partial_t u \, \bigr)
        \\
        \hspace{4ex} +\lambda(u -u_\mathrm{org}) \ni 0 \mbox{ in $ Q  $,}
        \\[1ex]
        u =  \partial_t u = 0 \mbox{ on $ \Sigma $, ~~ $ u(0, x) = u_0(x) $, $ x\in \Omega  $.}
    \end{cases}
\end{align*}

This formulation has the feature that the initial value of $\alpha$ is determined implicitly through the elliptic part of the system within the time-discrete scheme. On the other hand, since the time-evolution structure for $\alpha$ is removed, the conventional energy-dissipation structure becomes weaker. Consequently, standard arguments relying on the full dissipative structure cannot be directly applied.
%Consequently, it becomes more difficult to construct a stable variational time-evolution process.

%The purpose of this paper is to establish the well-posedness, i.e. the existence, uniqueness, continuous dependence, and an energy inequality for the system (S), within the above smooth anisotropy framework. Furthermore, we show that the proposed time-discretization scheme determines the initial orientation data consistently with the continuous model.

The essential mathematical difficulty lies in constructing a stable variational time-evolution scheme compatible with both the elliptic equation for $ \alpha $ and the weakened dissipative structure. In view of this difficulty, the Main Theorem of this paper establishes the well-posedness of the system (S), namely, existence, uniqueness, and continuous dependence of solutions, together with an energy-dissipation inequality within the above smooth anisotropy framework. Furthermore, we show that the proposed time-discretization scheme determines the initial orientation data consistently with the continuous model. These assertions constitute the main results of this paper.

The main results are obtained by means of a time-discretization method. The proposed scheme is regarded not only as an analytical approximation method, but also as a constructive procedure for variational time evolution involving the determination of the initial orientation data.

The organization of this paper is as follows. Section 2 presents preliminaries. Section 3 describes the main results, including the Main Theorem and the time-discretization scheme. Section 4 establishes auxiliary results for the time-discretization scheme. Finally, Section 5 proves the Main Theorem.

\section{Preliminaries}
First, throughout this paper, we use the following notations.
\begin{notn}[\bf{Real analysis}]\label{notn1}We define:
  \[a \lor b : = \max\{ a , b \} \mbox{ and } a \land b : = \min\{ a , b \} , \mbox{ for all } a , b \in [-\infty, \infty],\]
  and especially, we note:
  \[[a]^+:= a \lor 0 \mbox{ and } [a]^-:= -(a \land 0), \mbox{ for all } a \in [-\infty,\infty].\]

  Let $d \in \N$ be fixed dimension. We denote by $ | x | $ and $ x \cdot y $ the Euclidean norm of $ x \in \R^d$ and the standard scalar product of $ x , y \in \R^d$, respectively, i.e.,
  \begin{gather*}
    | x | : = \sqrt{x_1^2 + \cdots + x_d^2} \quad \mbox{and} \quad x \cdot y := x_1 y_1 + \cdots + x_d y_d,
    \\
    \mbox{ for all } x = [x_1 , \dots , x_d], \, y = [y_1 , \dots , y_d] \in \R^d.
  \end{gather*}
\end{notn}
In addition, we point out the following elementary fact:
\begin{description}
  \item[\textbf{(Fact 1)}] Let $ m \in \N $ be fixed. If $ \{A_k\}_{k=1}^m \subset \R $ and $ \{ a_n^k \}_{n\in\N}\subset\R $, 
  $ k = 1, \dots , m $ satisfies that:
  \[ \liminf_{ n \rightarrow \infty } a^k_n \geq A_k, \mbox{ for } k = 1, \dots , m, \mbox{ and }\limsup_{ n \rightarrow \infty} \sum_{k=1}^{m} a_n^k \leq \sum_{k=1}^{m} A_k. \]
  Then, $ \lim_{n \rightarrow \infty } a_n^k = A_k $, for $ k = 1 , \dots , m $. 
\end{description}
\begin{notn}[\bf{Abstract functional analysis}]\label{notn2}
  For an abstract Banach space $ X $, the norm of $ X $ is denoted by $ |\cdot|_X $, while the duality pairing between $ X $ and its dual $ X^* $ is represented as $ \langle \cdot, \cdot \rangle_X $. Specifically, when $ X $ is a Hilbert space, its inner product is denoted by $(\cdot, \cdot)_X $.

  For Banach spaces $ X_1 , \dots , X_d $ with $ 1 < d \in \mathbb{N} $, let $ X_1 \times \dots \times X_d $ denote the product Banach space with the norm
  \[| \cdot |_{ X_1\times \dots \times X_d } := | \cdot |_{X_1} + \dots + | \cdot |_{X_d}.\]
  However, when all $ X_1 , \dots , X_d $ are Hilbert spaces, $ X_1 \times \dots \times X_d $ represents the product Hilbert space with the inner product
  \[( \cdot , \cdot )_{ X_1 \times \dots \times X_d } := ( \cdot , \cdot )_{X_1} + \dots + ( \cdot , \cdot )_{X_d},\]
  and the norm 
  \[| \cdot |_{ X_1 \times \dots \times X_d } := \left( | \cdot |_{X_1}^2 + \dots + | \cdot |_{X_d}^2 \right)^{\frac{1}{2}}.\]
  In particular, when all $ X_1 , \dots , X_d $ are identical to a Banach space $ Y $, the product space $ X_1 \times \dots \times X_d $ is simply denoted by $ [Y]^d $.
\end{notn}
\begin{notn}[\bf{Convex analysis}]\label{notn3}
  For any proper lower semi-continuous (denoted by l.s.c., hereafter) and convex function $ \Psi : X \rightarrow (-\infty,\infty]$ on a Hilbert space $ X $, the effective domain of $ \Psi $ is denoted by $ D( \Psi ) $, and the subdifferential of $ \Psi $ is denoted by $ \partial \Psi $. The subdifferential $ \partial \Psi $ is a set-valued mapping that serves as a weak derivative of $ \Psi $. It is characterized as a maximal monotone graph in the product space $ X \times X $. More specifically, for any $ x_0 \in X $, the value $ \partial \Psi( x_0 ) $ is the set of all elements $ x_0^* \in X $ that satisfy the variational inequality
  \[( x_0^*, x - x_0 )_X \leq \Psi (x) - \Psi (x_0), \mbox{ for any } x \in D ( \Psi ), \]
  and the set $ D ( \partial \Psi ) := \{ x \in X \,|\, \partial \Psi (x) \neq \emptyset \}$ is referred to as the domain of $ \partial \Psi $. It is common to use the notation $`` [x_0, x_0^*] \in \partial \Psi $ in $ X \times X "$ to indicate that $`` x_0^* \in \partial \Psi ( x_0 ) $ in $ X $ for $ x_0 \in D ( \partial \Psi ) "$, by identifying the operator $ \partial \Psi $ with its graph in $ X \times X $.
\end{notn}
\begin{ex}\label{ex1}
  Let $\gamma: \R^2 \longrightarrow [0,+\infty)$ be a convex function that belongs to $C^{1,1}(\R^2)$, where the origin $ 0 \in \R^2 $ is the unique minimizer of $ \gamma $. Under these conditions, the following two statements are valid.
   \begin{description}
    \item[(I)] Let $\Phi$ be a functional on $[L^2(\Omega)]^2$, defined as:
  \begin{align*}
    &\Phi:\bm{w}\in[L^2(\Omega)]^2\mapsto\Phi(\bm{w}):=\int_\Omega\gamma(\bm{w})\,dx\in[0,\infty).
  \end{align*}
  Then, $\Phi$ is the proper, l.s.c., and convex function, such that 
  \[D(\Phi)=D(\partial\Phi)=[L^2(\Omega)]^2,\]
  and 
  \begin{equation*}
    \partial\Phi(\bm{w})=\{\nabla\gamma(\bm{w})\},\mbox{ in }[L^2(\Omega)]^2, \mbox{ for any }\bm{w}\in[L^2(\Omega)]^2.  
  \end{equation*}    
  \item[(II)] Let any open interval $I\subset(0,T)$, and let $\widehat{\Phi}^I$ be a functional on
  \\
  $L^2(I;[L^2(\Omega)]^2)\,(=[L^2(I;L^2(\Omega))]^2)$, defined as:
  \begin{align*}
    &\widehat{\Phi}^I:\bm{w}\in L^2(I;[L^2(\Omega)]^2)\mapsto\widehat{\Phi}^I(\bm{w}):=\int_I\Phi(\bm{w}(t))\,dt\in[0,\infty).
  \end{align*}
  Then, $\widehat{\Phi}^I$ is the proper, l.s.c., and convex function, such that 
  \[D(\widehat{\Phi}^I)=D(\partial\widehat{\Phi}^I)=L^2(I;[L^2(\Omega)]^2),\]
  and 
  \begin{align*}
    \partial\widehat{\Phi}^I(\bm{w})
    &=\{\tilde{\bm{w}}^*\in L^2(I;[L^2(\Omega)]^2)~|~\tilde{\bm{w}}^*(t)\in \partial \Phi(\bm{w}(t))\mbox{ in }[L^2(\Omega)]^2,\mbox{ a.e. }t\in I\}
    \\
    &=\{\nabla\gamma(\bm{w})\}, \mbox{ in }L^2(I;[L^2(\Omega)]^2), \mbox{ for any }\bm{w}\in L^2(I;[L^2(\Omega)]^2).
  \end{align*}
  \end{description}
\end{ex}

\begin{definition}[\bf{Mosco-convergence}: cf.\cite{MR0298508}]\label{dfnmosco}
  Let $ X $ be a Hilbert space. Let $ \Psi : X \rightarrow ( -\infty , \infty ] $ be a proper, l.s.c., and convex function, and let $ \{ \Psi_n \}_{ n \in \N } $ be a sequence of proper, l.s.c., and convex functions. Then, we say that $ \Psi_n \to  \Psi $ on $ X $ in the sense of Mosco, if and only if the following two conditions are satisfied:
\begin{description}
  \item[(M1) (Optimality)] For any $w_0 \in D ( \Psi )$, there exists a sequence $ \{w_n\}_{ n \in \N } \subset X $ such that $ w_n \rightarrow w_0 $ in $ X $ and $ \Psi_n ( w_n ) \rightarrow \Psi ( w_0 ) $ as $ n \rightarrow \infty $, 
  \item[(M2) (Lower-bound)] $\liminf_{ n \rightarrow \infty } \Psi_n ( w_n ) \geq \Psi ( w_0 )$ if $w_0 \in X, \{ w_n \}_{ n \in \N } \subset X $, and $w_n \rightarrow w_0 $ weakly in $ X $ as $ n \rightarrow \infty $.
\end{description}
\end{definition}
\begin{definition}[\bf{$ \Gamma $-convergence}; cf.\cite{MR1201152}]\label{dfngamma}
Let $ X $ be a Hilbert space. Let $ \Psi : X \rightarrow ( -\infty , \infty ] $ be a proper and l.s.c. function, and let $ \{ \Psi_n \}_{ n \in \N } $ be a sequence of proper and l.s.c. functions $ \Psi_n : X \rightarrow ( -\infty , \infty ] $, $ n \in \N $. Then, we say that {$ \Psi_n \to \Psi $} on $ X $ in the sense of $ \Gamma $-convergence, if and only if the following two conditions are satisfied:
\begin{description}
    \item[{\boldmath ($\Gamma$1) (Optimality)}] For any $w_0 \in D ( \Psi )$, there exists a sequence $ \{w_n\}_{ n \in \N } \subset X $ such that $ w_n \rightarrow w_0 $ in $ X $ and $ \Psi_n ( w_n ) \rightarrow \Psi ( w_0 ) $ as $ n \rightarrow \infty $, 
    \item[{\boldmath ($\Gamma$2) (Lower-bound)}] $\liminf_{ n \rightarrow \infty } \Psi_n ( w_n ) \geq \Psi ( w_0 )$ if $w_0 \in X, \{ w_n \}_{ n \in \N } \subset X $, and $w_n \rightarrow w_0 $ in $ X $ as $ n \rightarrow \infty $.
\end{description}
\end{definition}

\begin{rem}\label{rem2}
Under the assumption of convexity of functionals, it is important to note that Mosco convergence implies $ \Gamma $-convergence. This means that $ \Gamma $-convergence of convex functions can be interpreted as a weaker form of Mosco convergence. Moreover, regarding the $ \Gamma $-convergence of convex functions, the following observations hold true:
\begin{description}
\item[(Fact 2)] (cf.\cite[Theorem 3.66]{MR0773850} and \cite[Chapter 0]{Kenmochi81}) Let $ X $ be a Hilbert space, and let $ \Psi : X \rightarrow ( -\infty , \infty ] $ and $ \Psi_n : X \rightarrow ( -\infty , \infty ] $, $ n \in \N $, be proper, l.s.c., and convex functions defined on $ X $. Assume that $ \Psi_n \rightarrow \Psi $ on $ X $, in the sense of $ \Gamma $-convergence, as $ n \rightarrow \infty $. Furthermore, let us consider the following assumptions:
\begin{equation*}
  \left\{
  \begin{aligned}
    &[z, z^*] \in X \times X ,~ [z_n, z_n^*] \in \partial \Psi_n \mbox{ in } X \times X,~n \in \N,
    \\
    &z_n \rightarrow z \mbox{ in } X \mbox{ and } z_n^* \rightarrow z^* \mbox{ weakly in } X \mbox{ as } n \rightarrow \infty.  
  \end{aligned}
  \right.
\end{equation*}
Then, it holds that:
\[ [ z , z^* ] \in \partial \Psi \mbox{ in } X \times X , \mbox{ and } \Psi_n ( z_n ) \rightarrow \Psi ( z ) \mbox{ as } n \rightarrow \infty. \]
\item[(Fact 3)](cf.\cite[Lemma 4.1]{MR3661429} and \cite[Appendix]{MR2096945}) Let $ X $ be a Hilbert space, $ d \in \N $ a constant representing the dimension, and $ A \subset \R^d $ a bounded open set. Let $ \Psi : X \rightarrow ( -\infty , \infty ] $ and $ \Psi_n : X \rightarrow ( -\infty , \infty ] $, $ n \in \N $, be proper, l.s.c., and convex functions on a Hilbert space $ X $, where $ \Psi_n \rightarrow \Psi $ on $ X $, in the sense of $ \Gamma $-convergence, as $ n \rightarrow \infty $. Then, we define the sequence $ \{ \widehat{ \Psi }_n^A \}_{ n \in \N } $ of proper, l.s.c., and convex functions on $ L^2 ( A ; X ) $ as follows:
\begin{align*}
  z \in L^2 ( A ; X ) \mapsto \widehat{ \Psi }^A_n ( z ) &: = \left\{
    \begin{aligned}
      & \int_A \Psi _n ( z ( t ) ) \,dt, \mbox{ if } \Psi_n ( z ) \in L^1 ( A ), 
      \\
      & \infty, \mbox{ otherwise}, 
    \end{aligned}
  \right.
\mbox{ for }n \in \N;
\end{align*}
converges to a proper, l.s.c., and convex function $ \widehat{ \Psi }^A $ on $ L^2 ( A ; X ) $, defined as:
\begin{equation*}
  z \in L^2 ( A ; X ) \mapsto \widehat{ \Psi }^A ( z ) : = \left\{
    \begin{aligned}
      & \int_A \Psi ( z ( t ) ) \,dt, \mbox{ if } \Psi ( z ) \in L^1 ( A ), 
      \\
      & \infty, \mbox{ otherwise}, 
    \end{aligned}
  \right.
\end{equation*}
on $ L^2 ( A ; X ) $, in the sense of $ \Gamma $-convergence, as $ n \rightarrow \infty $.
\end{description}
\end{rem}

\begin{notn}\label{deftseq}
  Let $ \tau >0 $ be a constant representing time-step size, and let $\{t_i\}_{i=0}^\infty$ denote a time sequence defined by
  \begin{gather*}
    t_i := i\tau, \mbox{ for any } i = 0, 1, 2, \dots .
  \end{gather*}
  Let $ X $ be a Banach space. For any sequence $ \{ [t_i , u_i] \}_{i=0}^\infty \subset [0,\infty) \times X $, we define three types of interpolations: $ [ \overline{u} ]_\tau,[ \underline{u} ]_\tau \in L^\infty_{\mathrm{loc}}([0,\infty);X) $, and $ [ u ]_\tau \in W^{1,2}_{\mathrm{loc}}([0,\infty);X) $, as follows:
\begin{equation*}
  \left\{
  \begin{aligned}
    &[\overline{u}]_\tau(t):=\chi_{(-\infty,0]}u_{0}+\sum_{i=1}^{\infty}\chi_{(t_{i-1},t_i]}(t)u_{i},
    \\
    &[\underline{u}]_\tau(t):=\sum_{i=0}^{\infty}\chi_{(t_i,t_{i+1}]}(t)u_{i},\hspace*{25ex}\mbox{in }X,\mbox{ for any } t\geq0,
    \\
    &[u]_\tau(t):=\sum_{i=1}^{\infty}\chi_{(t_{i-1},t_i]}(t)\biggl(\frac{t-t_{i-1}}{\tau}u_{i}+
    \frac{t_i-t}{\tau}u_{i-1}\biggr),
  \end{aligned}
  \right.
\end{equation*}
where $ \chi_E : \R \rightarrow \{0,1\} $ represents the characteristic function of the set $ E \subset \R $.
\end{notn}

\section{Main results}
\label{sec:main}

In this paper, the Main Theorem is considered under the following conditions.

\begin{itemize}
  \item[(A0)] The constants $p > 2$, $\nu > 0$, $\mu > 0$, and $\lambda > 0 $ are fixed, and $\kappa > 0$ is a sufficiently large positive constant.
    \vspace{1ex}
  \item[(A1)] A fixed function $ u_{\mathrm{org}} \in L^2 ( \Omega ) $ is given, and it satisfies $ 0 \leq u_\mathrm{org} \leq 1 $ a.e. in $ \Omega $. 
  \item[(A2)] $\gamma: \R^2 \longrightarrow [0,+\infty)$ is a fixed $C^1$-class and convex function satisfying the following conditions: 
      \begin{gather*}
          \operatorname*{arg\,min}_{w \in \mathbb{R}^2} \gamma(w) = \{0\}, \nabla \gamma \in L^\infty(\R^2; \R^2), \mbox{ and } \nabla^2 \gamma \in L^\infty(\R^2; \R^{2\times2}),
      \end{gather*}
      that is, $ \gamma $ and $\nabla \gamma$ are Lipschitz continuous, and the origin $ 0 \in \R^2 $ is the unique minimizer of $ \gamma $.  
  \item[(A3)] $ u_0 \in W^{1,p}_0(\Omega)$ is a fixed initial data such that $0\leq u_0 \leq1$ a.e. in $\Omega$.
\end{itemize}

Next, let us give the definition of the solution to the system (S).

\begin{definition}
A pair of functions $[\alpha,u]\in [L^2(0,T;L^2(\Omega))]^2$ is called a solution to the system (S) if and only if $[\alpha,u]$ fulfills the following conditions:
\begin{itemize}
  \item [(S0)]$\alpha\in W^{1,2}(0,T; H^1_0(\Omega))$, $u\in W^{1,2}(0,T;H^1_0(\Omega))$ $\cap$ $L^\infty(0,T;W^{1,p}_0(\Omega))$ $\subset C(\overline{Q})$, and {$0 \leq u\leq 1 $ in $\overline{Q}$}.
  \item [(S1)]$\alpha$ solves the following variational identity: 
      \begin{gather*}
        \kappa(\nabla\alpha(t),\nabla\varphi)_{[L^2(\Omega)]^2}+\int_{\Omega}\nabla\gamma(R(\alpha(t))\nabla u(t))\cdot R\left(\alpha(t){\ts +\frac{\pi}{2} }\right)\nabla u(t)\varphi \,dx=0,
        \\
        \mbox{ for any }\varphi\in H^1_0(\Omega),\mbox{ and a.e. }t\in(0,T).
      \end{gather*}
  \item[(S2)]$u$ solves the following variational inequality:
    \begin{gather*}
        (\partial_tu(t),u(t)-\psi)_{L^2(\Omega)}+\mu(\nabla\partial_tu(t),\nabla(u(t)-\psi))_{[L^2(\Omega)]^2}
        \\
        +\nu\int_{\Omega}|\nabla u(t)|^{p-2}\nabla u(t)\cdot\nabla(u(t)-\psi)\,dx+\lambda(u(t)-u_\mathrm{org},u(t)-\psi)_{L^2(\Omega)}
        \\
          +\int_{\Omega}\gamma(R(\alpha(t))\nabla u(t))\,dx
        \leq\int_{\Omega}\gamma (R(\alpha(t))\nabla\psi)\,dx,
        \\
        \mbox{ for any }\psi\in W^{1,p}_0(\Omega),\mbox{ and a.e. }t\in(0,T).  
    \end{gather*}
    \item[(S3)] $ u(0) = u_0 $ in $ L^2(\Omega) $.
\end{itemize}
\end{definition}

Now, the Main Theorem is stated as follows.

  \begin{mainThm}(Well-posedness and energy-dissipation)\label{mainThm1}
  Under the assumptions (A0)--(A3), there exists a constant $\kappa_*> 0 $ such that for any $\kappa > \kappa_*$, the system (S) admits a solution $[\alpha,u]$. Additionally, let $ u_{0, k} \in W_0^{1, p}(\Omega) $, $ k = 1, 2 $, be two initial data, and let $ [\alpha_k,u_k] \in [L^2(0, T; L^2(\Omega))]^2 $, $ k = 1, 2 $, be the corresponding solutions to the system (S) with initial data $ u_0 = u_{0, k} $, $ k = 1, 2 $. Let $ J(t) $ be the function of time defined as:
  \begin{gather*}%\label{defOfJ}
      J(t) := |(u_1 -u_2)(t)|^2_{L^2(\Omega)} +\mu |\nabla (u_1 -u_2)(t)|_{[L^2(\Omega)]^2}, \mbox{ for all } t \in [0,T].
  \end{gather*}
  Then, there exists a positive constant $ C_* >0 $ such that for any $\kappa > \kappa_*$, 
  \begin{gather}
        \frac{d}{dt}J(t)+\frac{1}{2}(\kappa-\kappa_*)|\nabla(\alpha_1-\alpha_2)(t)|^2_{[L^2(\Omega)]^2}\leq C_*(1+|u_1|_{L^\infty(0,T;W^{1,p}_0(\Omega))})^2J(0),
        \nonumber
        \\
        \mbox{ a.e. $ t \in (0, T) $.}\label{cncl_J}
  \end{gather}
{
  Moreover, the solution $[\alpha,u]$ satisfies the following energy-inequality:
}
  \begin{gather}
    \frac{1}{2}\int_{s}^{t}|\partial_tu(\sigma)|^2_{L^2(\Omega)}\,d\sigma
      +\frac{\mu}{2}\int_{s}^{t}|\nabla\partial_tu(\sigma)|^2_{[L^2(\Omega)]^2}\,d\sigma  
      +E({\alpha}(t),{u}(t))\leq E({\alpha}(s),{u}(s)),
      \nonumber
      \\
      \mbox{for any $ s \in [0, T] $ and any } t \in [s, T].
    \label{ene-inq1}
  \end{gather}
\end{mainThm}

\subsection{Time-discretization scheme}
The Main Theorem is proved by means of the time-discretization method applied to our system (S). 
Let $ m \in \N $ denote the number of subdivisions of the time-interval $ (0, T) $. Let $\tau := \frac{T}{m}$ be the time-step size, and let $\{t_i\}_{i=1}^m$ be a time sequence defined by $t_i := i\tau$, for $i = 1, 2, \dots, m$. 

Based on these, we set up the time-discretization scheme (AP)$^\tau$ as an approximating problem of (S):
\\

(AP)$^\tau$: \, To find $\{[\alpha^{i}, u^{i}]\}_{i=1}^m \subset H^1_0(\Omega) \times W^{1,p}_0(\Omega) $ satisfying: 
  \begin{align*}
    %\label{AP_alpha}
    &-\kappa\Delta_0\alpha^{i}
    +\nabla\gamma(R(\alpha^{i})\nabla u^{i})\cdot R\left(\alpha^{i}+{\ts \frac{\pi}{2}}\right)
    \nabla u^{i}=0~\mathrm{in}~L^2(\Omega),\nonumber
    \\
    %\label{AP_u}
    &\frac{u^{i}-u^{i-1}}{\tau}
    -\mathrm{div}\Bigl( {^\top} R(\alpha^{i})\nabla{\gamma}(R(\alpha^{i})\nabla u^{i})
    +\nu|\nabla u^{i}|^{p-2}\nabla u^{i}+\frac{\mu}{\tau}\nabla(u^{i}-u^{i-1})\Bigr)
    \\
    &\quad 
    +\lambda(u^{i}-u_{\mathrm{org}})
    =0~\mathrm{in}~L^2(\Omega),\nonumber
%      \\
%      & \hspace{20ex}
      ~~\mbox{for }i=1,2,\dots,m,
    \end{align*}
where $ u^{0}=u_0 \in W^{1,p}_0(\Omega) $ is as in (A3), and $\Delta_0$ is the Laplacian operator subject to the zero-Dirichlet boundary condition. Moreover, we give a function $\alpha^0\in H^1_0(\Omega)$ that satisfies the following condition. 
\begin{align}\label{alpha0}
  -\kappa\Delta_0\alpha^0+\nabla\gamma(R(\alpha^{0})\nabla u_0)\cdot R\left(\alpha^{0}+{\ts \frac{\pi}{2}}\right)\nabla u_0=0~\mathrm{in}~L^2(\Omega).
\end{align}
\begin{rem}
The elliptic problem \eqref{alpha0} plays some essential roles in the present analysis. First, the existence of such a function $\alpha^0$ is required in order to obtain an Aubin-type compactness property for the sequence of approximate solutions generated by the time-discretization scheme. 
Secondly, the function $\alpha^0$ is required in the analysis of the energy-dissipation property associated with the energy inequality \eqref{ene-inq1}.
Thirdly, the function $\alpha^0$ corresponds to the initial value of the orientation variable $\alpha$. Hence, the above elliptic equation can be regarded as the determining condition for the initial orientation data discussed in the Introduction. Moreover, we can verify that the solution $\alpha^0$ to \eqref{alpha0} is unique, if $\kappa$ satisfies the following condition:
  \begin{align*}
    \kappa>4\sqrt{2}(1+|\nabla u_0|_{[L^p(\Omega)]^2})^2(1+C_P)^2\Big(C_{H^1}^{L^\frac{2p}{p-2}}+C_{H^1}^{L^\frac{2p}{p-1}}\Big)^2|\nabla\gamma|_{W^{1,\infty}(\R^2;\R^2)}.
  \end{align*}
  Here, let $C_P$  denote the constant in Poincar\'e's inequality, and let $C_{H^1}^{L^\frac{2p}{p-2}}$ and $C_{H^1}^{L^\frac{2p}{p-1}}$ denote the constants associated with the Sobolev embeddings $H^1(\Omega) \subset L^\frac{2p}{p-2}(\Omega)$ and $H^1(\Omega) \subset L^\frac{2p}{p-1}(\Omega)$ in two dimensions, respectively.
\end{rem}

\section{Auxiliary results for time-discretization scheme}

This section is devoted to auxiliary results for the time-discretization scheme (AP)$^\tau$.
The solution to the scheme (AP)$^\tau$ is defined as follows.
\begin{definition}
  A sequence of pair of functions $\{[ \alpha^{i}, u^{i}]\}_{i=1}^m$ is called a solution to (AP)$^\tau$ if $\{[ \alpha^{i}, u^{i}]\}_{i=1}^m \subset H^1_0(\Omega) \times W^{1,p}_0(\Omega)$, and $[ \alpha^{i}, u^{i}]$ fulfills the following items for any $i = 1,2,\dots,m$:
  \\
  (AP1;$\alpha^i$,$u^i$) $\alpha^{i}$ solves the following variational identity:
  \begin{gather*}
    \kappa(\nabla\alpha^{i},\nabla\varphi)_{[L^2(\Omega)]^2}
    +(\nabla{\gamma}(R(\alpha^{i})\nabla u^{i}) \cdot 
    R\left(\alpha^{i}+{\ts \frac{\pi}{2}}\right)\nabla u^{i},\varphi)_{L^2(\Omega)}
    =0,
    \\
    \mbox{ for any }\varphi\in H^1_0(\Omega).
  \end{gather*}
  (AP2;$\alpha^i$,$u^i$) $u^{i}$ solves the following variational identity:
  \begin{align*}
    & \frac{1}{\tau}(u^{i}-u^{i-1},\psi)_{L^2(\Omega)}+(\nabla\gamma(R(\alpha^{i})\nabla u^{i}),R(\alpha^{i})\nabla\psi)_{[L^2(\Omega)]^2}
    \\
    &\quad +\nu\int_\Omega|\nabla u^{i}|^{p-2}\nabla u^{i}\cdot\nabla\psi\,dx
    +\frac{\mu}{\tau}(\nabla(u^{i}-u^{i-1}),\nabla\psi)_{[L^2(\Omega)]^2}
    \\
    &\quad +\lambda(u^{i}-u^{\mathrm{org}},\psi)_{L^2(\Omega)}=0,~~~\mbox{ for any }\psi\in W^{1,p}_0(\Omega).
  \end{align*}
\end{definition}
\begin{theorem}(Existence, uniqueness of solution with energy-inequality)\label{003Thm1}
\\
  There exists a positive constant $\widehat{\kappa} > 0 $, and a sufficiently small constant $\widehat{\tau}\in (0,1)$ such that for any $\kappa > \widehat{\kappa}$, and for any $ \tau \in (0,\widehat{\tau}) $, (AP)$^\tau$ admits a unique solution $\{[ \alpha^{i}, u^{i}]\}_{i=1}^m$. Moreover, the solution $\{[ \alpha^{i}, u^{i}]\}_{i=1}^m$ fulfills the following energy-inequality:
  \begin{gather}
    \frac{1}{2\tau}|u^{i}-u^{i-1}|^2_{L^2(\Omega)}+\frac{\mu}{2\tau}|\nabla(u^{i}-u^{i-1})|^2_{[L^2(\Omega)]^2} +E(\alpha^{i},u^{i}) \leq E(\alpha^{i-1},u^{i-1}),
    \nonumber
    \\
    \mbox{ for any } i=1,2,\dots,m.\label{f-ene0}
  \end{gather}
\end{theorem}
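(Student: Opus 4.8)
The plan is to prove existence and uniqueness for the coupled pair $[\alpha^i,u^i]$ at each time step by induction on $i$, then derive the energy-inequality \eqref{f-ene0} from the variational characterization. The starting point is $[\alpha^0,u^0]$: $u^0=u_0$ is given by (A3) and $\alpha^0$ solves the stated elliptic identity, which (by the Remark's smallness condition on $\kappa$) has a unique solution. For the inductive step, assume $[\alpha^{i-1},u^{i-1}]$ is known with $u^{i-1}\in W^{1,p}_0(\Omega)$; I want to produce $[\alpha^i,u^i]$. The natural scheme is a fixed-point argument in the $\alpha$-variable: given a trial orientation $\tilde\alpha\in H^1_0(\Omega)$, the $u$-equation (AP2) with $\alpha^i$ replaced by $\tilde\alpha$ is the Euler--Lagrange equation of a strictly convex, coercive functional on $W^{1,p}_0(\Omega)$ (the $\nu\|\nabla u\|_p^p/p$ term gives coercivity for $p>2$, the $\frac{1}{2\tau}\|u\|_2^2 + \frac{\mu}{2\tau}\|\nabla u\|_2^2 + \frac{\lambda}{2}\|u-u_{\mathrm{org}}\|_2^2$ terms give strict convexity and control the lower-order contributions, and $\int_\Omega\gamma(R(\tilde\alpha)\nabla u)\,dx$ is convex in $\nabla u$ since $\gamma$ is convex and $R(\tilde\alpha)$ is an orthogonal matrix depending measurably on $x$), so it has a unique minimizer $u=\Lambda_1(\tilde\alpha)\in W^{1,p}_0(\Omega)$. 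Then feed $u$ into (AP1): the map $\varphi\mapsto$ (AP1) is the Euler--Lagrange relation of $\frac{\kappa}{2}\|\nabla\alpha\|_2^2 + \int_\Omega\gamma(R(\alpha)\nabla u)\,dx$ in $\alpha$; although $\alpha\mapsto\gamma(R(\alpha)\nabla u)$ is not convex, for $\kappa$ large the quadratic term dominates and one gets a unique solution $\alpha=\Lambda_2(u)\in H^1_0(\Omega)$ via a contraction estimate (this is exactly the role of $\widehat\kappa$). Composing, $T:=\Lambda_2\circ\Lambda_1$ maps $H^1_0(\Omega)$ into itself, and for $\kappa$ large enough and $\tau$ small enough it is a contraction in the $H^1_0$-norm; Banach's fixed-point theorem then yields the unique fixed point $\alpha^i$, and $u^i:=\Lambda_1(\alpha^i)$.

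The estimates needed for the contraction are the heart of the matter. For $\Lambda_2$ one subtracts the $\alpha$-equations for two data $u,v$, tests with $\alpha_u-\alpha_v$, and uses $\kappa\|\nabla(\alpha_u-\alpha_v)\|_2^2\le$ (Lipschitz terms from $\nabla\gamma$, $\nabla^2\gamma\in L^\infty$ in (A2), the boundedness of $R(\cdot)$ and $R(\cdot+\tfrac\pi2)$, and of $\|\nabla u\|_p$, $\|\nabla v\|_p$) $\times\|\nabla(\alpha_u-\alpha_v)\|_2\|u-v\|_{W^{1,p}_0}$-type quantities, absorbed using $H^1(\Omega)\hookrightarrow L^q(\Omega)$ (any $q<\infty$ in 2D — these are the embedding constants $C_{H^1}^{L^{2p/(p-2)}}$, $C_{H^1}^{L^{2p/(p-1)}}$ appearing in the Remark) and Poincaré. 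This produces a Lipschitz constant for $\Lambda_2$ of the form $C(1+\|\nabla u_0\|_p+\dots)/\kappa$ controlling also the dependence of $\|\nabla u^i\|_p$ along the induction. For $\Lambda_1$, subtracting the $u$-equations for two trial orientations $\tilde\alpha,\tilde\beta$ and testing with $u_{\tilde\alpha}-u_{\tilde\beta}$, the leading terms $\frac1\tau\|u_{\tilde\alpha}-u_{\tilde\beta}\|_2^2+\frac\mu\tau\|\nabla(u_{\tilde\alpha}-u_{\tilde\beta})\|_2^2+\nu(\text{monotonicity of the }p\text{-Laplacian})$ must dominate the difference of the $\gamma$-terms, which is $O(\|\tilde\alpha-\tilde\beta\|)$ by the Lipschitz bound on $\nabla\gamma$ and on $\theta\mapsto R(\theta)$; the factor $1/\tau$ in front of the good terms is what lets one choose $\widehat\tau$ small so that the composition contracts. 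I expect this bookkeeping — keeping the $\kappa$-dependence and the $\tau$-dependence separate, and making sure $\|\nabla u^i\|_p$ stays bounded uniformly in $i$ and $m$ — to be the main technical obstacle.

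For the energy-inequality \eqref{f-ene0}, the plan is to test (AP2;$\alpha^i$,$u^i$) with $\psi=u^i-u^{i-1}\in W^{1,p}_0(\Omega)$. The time-difference terms give exactly $\frac1\tau\|u^i-u^{i-1}\|_2^2$ and $\frac\mu\tau\|\nabla(u^i-u^{i-1})\|_2^2$; the $p$-Laplacian term is handled by the elementary convexity inequality $|a|^{p-2}a\cdot(a-b)\ge\frac1p(|a|^p-|b|^p)$, giving $\frac\nu p\|\nabla u^i\|_p^p-\frac\nu p\|\nabla u^{i-1}\|_p^p$; the $\lambda$-term gives $\ge\frac\lambda2\|u^i-u_{\mathrm{org}}\|_2^2-\frac\lambda2\|u^{i-1}-u_{\mathrm{org}}\|_2^2$ by the identity $2b(b-a)=b^2-a^2+(b-a)^2\ge b^2-a^2$; and the $\gamma$-term contributes $(\nabla\gamma(R(\alpha^i)\nabla u^i),R(\alpha^i)\nabla(u^i-u^{i-1}))\ge\int_\Omega\gamma(R(\alpha^i)\nabla u^i)\,dx-\int_\Omega\gamma(R(\alpha^i)\nabla u^{i-1})\,dx$ by convexity of $\gamma$ and $\partial\Phi(\bm w)=\{\nabla\gamma(\bm w)\}$ from Example \ref{ex1}(I). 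Collecting, one obtains $\frac1{2\tau}\|u^i-u^{i-1}\|_2^2+\frac\mu{2\tau}\|\nabla(u^i-u^{i-1})\|_2^2+\mathcal E_i\le\mathcal E_{i-1}'$ where $\mathcal E_i$ is $E(\alpha^i,u^i)$ minus the $\kappa$-term and $\mathcal E_{i-1}'$ replaces $\alpha^i$ by $\alpha^i$ inside $\gamma(R(\cdot)\nabla u^{i-1})$; to close it I must add back the orientation energies and show $\frac\kappa2\|\nabla\alpha^i\|_2^2+\int_\Omega\gamma(R(\alpha^i)\nabla u^{i-1})\,dx\le\frac\kappa2\|\nabla\alpha^{i-1}\|_2^2+\int_\Omega\gamma(R(\alpha^{i-1})\nabla u^{i-1})\,dx$, i.e. that $\alpha^i$ does not increase the $\alpha$-energy evaluated at the previous $u$. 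This follows because $\alpha^i$ is (for $\kappa$ large) the unique minimizer of $\beta\mapsto\frac\kappa2\|\nabla\beta\|_2^2+\int_\Omega\gamma(R(\beta)\nabla u^i)\,dx$ — one compares at $\beta=\alpha^{i-1}$ and uses a Lipschitz-in-$\nabla u$ estimate for this minimum value together with the already-established bound on $\|\nabla(u^i-u^{i-1})\|_2$; for $\kappa>\widehat\kappa$ the resulting error is absorbed, giving \eqref{f-ene0}.
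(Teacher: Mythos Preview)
Your existence strategy (splitting the system into $u\mapsto\Lambda_1(\tilde\alpha)$ and $\alpha\mapsto\Lambda_2(u)$ and running a Banach fixed point) is workable in principle, but it is not what the paper does, and the difference is not cosmetic: it is precisely what makes your energy-inequality argument break down. The paper obtains $[\alpha^i,u^i]$ in one shot as a \emph{joint} minimizer of the nonconvex but coercive, weakly l.s.c.\ functional
\[
\Psi_{u^{i-1}}(y,z)\;=\;E(y,z)+\frac{1}{2\tau}\|z-u^{i-1}\|_{L^2}^2+\frac{\mu}{2\tau}\|\nabla(z-u^{i-1})\|_{[L^2]^2}^2
\]
on $H^1_0(\Omega)\times W^{1,p}_0(\Omega)$ (Lemma~\ref{003lemma1}). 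Once you know $[\alpha^i,u^i]$ minimizes $\Psi_{u^{i-1}}$, the energy inequality \eqref{f-ene0} is a one-line comparison with the competitor $[\alpha^{i-1},u^{i-1}]$: since $\Psi_{u^{i-1}}(\alpha^{i-1},u^{i-1})=E(\alpha^{i-1},u^{i-1})$, you get
\[
\frac{1}{2\tau}\|u^i-u^{i-1}\|_{L^2}^2+\frac{\mu}{2\tau}\|\nabla(u^i-u^{i-1})\|_{[L^2]^2}^2+E(\alpha^i,u^i)=\Psi_{u^{i-1}}(\alpha^i,u^i)\le E(\alpha^{i-1},u^{i-1}).
\]
This simultaneously yields the uniform bound $\frac{\nu}{p}\|\nabla u^i\|_{[L^p]^2}^p\le E(\alpha^0,u_0)$, which is then fed into the uniqueness estimate to fix $\widehat\kappa,\widehat\tau$ once and for all.

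Your route loses this. After testing (AP2) with $u^i-u^{i-1}$ you correctly arrive at an inequality whose right-hand side contains $\int_\Omega\gamma(R(\alpha^i)\nabla u^{i-1})\,dx$, not $\int_\Omega\gamma(R(\alpha^{i-1})\nabla u^{i-1})\,dx$. To pass from one to the other you invoke that $\alpha^i$ minimizes $\beta\mapsto\frac{\kappa}{2}\|\nabla\beta\|^2+\int_\Omega\gamma(R(\beta)\nabla u^i)\,dx$; but this is the functional with $u^i$, not $u^{i-1}$, so comparing at $\beta=\alpha^{i-1}$ gives the wrong inequality. Your proposed fix---a Lipschitz-in-$\nabla u$ estimate for the minimum value, with the error absorbed by the dissipation---requires controlling $\|\nabla(\alpha^i-\alpha^{i-1})\|_{[L^2]^2}$ by $\|\nabla(u^i-u^{i-1})\|_{[L^2]^2}$ with a constant depending on $\|\nabla u^i\|_{[L^p]^2}$. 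That last quantity is exactly what the energy inequality is supposed to bound, so the argument is circular: you need \eqref{f-ene0} to control $\|\nabla u^i\|_{[L^p]^2}$, and you need $\|\nabla u^i\|_{[L^p]^2}$ to close \eqref{f-ene0}. The same circularity afflicts your fixed-point existence step, since the contraction constants for $\Lambda_1,\Lambda_2$ also depend on $\|\nabla u^i\|_{[L^p]^2}$. The paper's joint-minimization approach dissolves both problems at once: existence by the direct method, energy inequality by competitor comparison, uniform $L^p$-gradient bound as an immediate corollary, and only \emph{then} uniqueness.
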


We prepare a lemma for the proof of Theorem \ref{003Thm1}. 

\begin{lemma}\label{003lemma1}
  For $\overline{w}\in H^1_0(\Omega)$, we consider the following elliptic boundary value problem ($E; \alpha,u,\overline{w}$)$_\tau$.
  \begin{align}
    &-\kappa\Delta_0\alpha+\nabla{\gamma}(R(\alpha)\nabla u)\cdot R(\alpha+{\textstyle\frac{\pi}{2}})\nabla u=0,\label{E1}
    \\
    &\frac{u-\overline{w}}{\tau}-\mathrm{div}\Bigl(\hspace{-0.5ex}~^\top\hspace{-0.5ex} R(\alpha)\nabla{\gamma}(R(\alpha)\nabla u)+\nu|\nabla u|^{p-2}\nabla u+\frac{\mu}{\tau}\nabla(u-\overline{w})\Bigr)+\lambda(u-u_{\mathrm{org}})=0.\label{E2}
  \end{align}
  Then, the problem ($E; \alpha,u,\overline{w}$)$_\tau$ has a solution in the following sense:
  \begin{gather*}
    \kappa(\nabla\alpha,\nabla\varphi)_{[L^2(\Omega)]^2}
    +(\nabla{\gamma}(R(\alpha)\nabla u) \cdot 
    R\left(\alpha+{\ts \frac{\pi}{2}}\right)\nabla u,\varphi)_{L^2(\Omega)}
    =0,
    \\
    \mbox{ for all }\varphi\in  H^1_0(\Omega),
  \end{gather*}
  and
  \begin{align*}
    &\frac{1}{\tau}(u-\overline{w},\psi)_{L^2(\Omega)}+(\nabla\gamma(R(\alpha)\nabla u),R(\alpha)\nabla\psi)_{[L^2(\Omega)]^2}+\frac{\mu}{\tau}(\nabla(u-\overline{w}),\nabla\psi)_{[L^2(\Omega)]^2}
    \\
    &\quad +\nu\int_\Omega|\nabla u|^{p-2}\nabla u\cdot\nabla\psi\,dx
    +\lambda(u-u_{\mathrm{org}},\psi)_{L^2(\Omega)}=0,~~~\mbox{ for all }\psi\in W^{1,p}_0(\Omega).
  \end{align*}
  Moreover, if $\kappa$ and $\tau$ satisfy the following largeness conditions:
\begin{align}
  &\kappa>2C_1:=8\sqrt{2}(C_{H^1}^{L^{\frac{2p}{p-2}}}+C_{H^1}^{L^{\frac{2p}{p-1}}})^2(1+C_P)^2|\nabla\gamma|_{W^{1,\infty}(\R^2;\R^{2})}|\nabla u|_{[L^p(\Omega)]^2}^2,~~~~\label{C_1}
  \\
  &\tau<C_2:=\frac{{C_1}(1\land\mu)(1+|\nabla\gamma|_{W^{1,\infty}(\R^2;\R^{2})})^{-2}\left(1+|\nabla u|_{[L^p(\Omega)]^2}\right)^{-2}}{54(1+C_P)^2(1+C_{H^1}^{L^{\frac{2p}{p-2}}})^2(1+2C_1)}.\label{C_2}
\end{align}
  then the solution is unique. 
  \begin{proof}
    According to standard results in the calculus of variations (see, e.g., \cite{MR0390843, MR1727362}), a solution to ($E; \alpha,u,\overline{w}$)$_\tau$ corresponds to a minimizer of the following proper and weakly l.s.c. functional on $[L^2(\Omega)]^2$:
    \begin{equation*}
      \Psi_{\overline{w}}:[y,z]\in [L^2(\Omega)]^2\mapsto\Psi_{\overline{w}}(y,z):=\left\{
      \begin{aligned}
        &E(y,z)+ \frac{1}{2\tau}\int_\Omega |z-\overline{w}|^2\,dx
        \\
        &\qquad+\frac{\mu}{2\tau}\int_\Omega |\nabla(z-\overline{w})|^2\,dx
        \\
        &\qquad \mbox{ if }[y,z]\in H^1_0(\Omega) \times W^{1,p}_0(\Omega),
        \\
        &+\infty, \mbox{ otherwise}.
      \end{aligned}
      \right.
    \end{equation*}
    Moreover, for any $[y,z]\in [L^2(\Omega)]^2$, we have
    \begin{align*}
      \Psi_{\overline{w}}(y,z)\geq \frac{\kappa}{2(C_P)^2}|y|^2_{L^2(\Omega)}+\frac{\lambda}{4}|z|^2_{L^2(\Omega)}-\frac{\lambda}{2}|u_{\mathrm{org}}|^2_{L^2(\Omega)},
    \end{align*}
    where $C_P$ is the Poincar\'e constant. It follows from standard results in the calculus of variations that the existence of a minimizer of $\Psi_{\overline{w}}$ follows easily.

    Next, we set $[\tilde{\alpha},\tilde{u}]\in H^1_0(\Omega) \times W^{1,p}_0(\Omega)$ as the another solution to 
    \\
    ($E; \tilde{\alpha},\tilde{u},\overline{w}$)$_\tau$, and confirm that $[\tilde{\alpha},\tilde{u}]$ coincides with the solution $[\alpha,u]$ to ($E; \alpha,u,\overline{w}$)$_\tau$ obtained via minimizing problem, under the largeness condition of $\kappa$ and $\tau$ as in \eqref{C_1} and \eqref{C_2}. In this case, let us take the difference between ($E;\alpha,u,\overline{w}$)$_\tau$ and ($E;\tilde{\alpha},\tilde{u},\overline{w}$)$_\tau$, multiplying both sides of \eqref{E1} by $(\alpha-\tilde{\alpha})$, multiplying both sides of \eqref{E2} by $(u-\tilde{u})$, and adding the two equations, we obtain:
    \begin{align*}
      &\kappa|\nabla(\alpha-\tilde{\alpha})|^2_{[L^2(\Omega)]^2}+\frac{1}{\tau}\bigl(|u-\tilde{u}|^2_{L^2(\Omega)}+\mu|\nabla(u-\tilde{u})|^2_{[L^2(\Omega)]^2}\bigr)
      \\
      &\quad+\lambda|u-\tilde{u}|^2_{L^2(\Omega)}+\nu\int_\Omega(|\nabla u|^{p-2}\nabla u-|\nabla \tilde{u}|^{p-2}\nabla \tilde{u})\cdot\nabla(u-\tilde{u})\,dx
      \\
      &\quad+\int_\Omega\bigl(\nabla\gamma(R(\alpha)\nabla u)\cdot R(\alpha+{\ts\frac{\pi}{2}})\nabla u-\nabla\gamma(R(\tilde{\alpha})\nabla \tilde{u})\cdot R(\tilde{\alpha}+{\ts\frac{\pi}{2}})\nabla \tilde{u}\bigr)(\alpha-\tilde{\alpha})\,dx
      \\
      &\quad+\int_\Omega\bigl(^\top R(\alpha)\nabla\gamma(R(\alpha)\nabla u)-^\top R(\tilde{\alpha})\nabla\gamma(R(\tilde{\alpha})\nabla \tilde{u})\bigr)\cdot\nabla(u-\tilde{u})\,dx=0.
    \end{align*}
    Also, by using (A2), and Tartar's inequality for p-Laplace operator (cf. \cite[Lemma 2.1]{MR2802983}), we can see that 
    \begin{align}
      &\kappa|\nabla(\alpha-\tilde{\alpha})|^2_{[L^2(\Omega)]^2}+\frac{1}{\tau}\left(|u-\tilde{u}|^2_{L^2(\Omega)}+\mu|\nabla(u-\tilde{u})|^2_{[L^2(\Omega)]^2}\right)\notag
      \\
      &\leq 4\sqrt{2}|\nabla^2\gamma|_{L^\infty(\R^2;\R^{2\times2})}\int_\Omega|\nabla u|^2|\alpha-\tilde{\alpha}|^2\,dx\notag
      \\
      &\quad +8|\nabla^2\gamma|_{L^\infty(\R^2;\R^{2\times2})}\int_\Omega|\nabla(u-\tilde{u})||\nabla u||\alpha-\tilde{\alpha}|\,dx\notag
      \\
      &\quad +4|\nabla\gamma|_{L^\infty(\R^2;\R^{2})}\int_\Omega|\nabla u||\alpha-\tilde{\alpha}|^2\,dx\notag
      \\
      &\quad +6|\nabla\gamma|_{L^\infty(\R^2;\R^{2})}\int_\Omega|\nabla(u-\tilde{u})||\alpha-\tilde{\alpha}|\,dx\notag
      \\
      &\quad +2|\nabla^2\gamma|_{L^\infty(\R^2;\R^{2\times2})}|\nabla(u-\tilde{u})|^2_{[L^2(\Omega)]^2}\notag
      \\[0.5ex]
      &=:I_1+I_2+I_3+I_4+2|\nabla^2\gamma|_{L^\infty(\R^2;\R^{2\times2})}|\nabla(u-\tilde{u})|^2_{[L^2(\Omega)]^2}.\label{lem004}
    \end{align}
    Here, let us denote by $C_{H^1}^{L^{\frac{2p}{p-2}}}$ and $C_{H^1}^{L^{\frac{2p}{p-1}}}$ the constants of two-dimensional Sobolev embeddings $H^1(\Omega)\subset L^{\frac{2p}{p-2}}(\Omega)$ and $H^1(\Omega)\subset L^{\frac{2p}{p-1}}(\Omega)$, respectively. Additionally, let $C_P$ represent the constant related to Poincar\'e's inequality. By applying Young's inequality, the integral terms $I_j$, for $j=1,2,3,4$, in \eqref{lem004} can be estimated as follows:
\begin{align}
  I_1&\leq 4\sqrt{2}|\nabla\gamma|_{W^{1,\infty}(\R^2;\R^{2})}|\nabla u|^2_{[L^p(\Omega)]^2}|\alpha-\tilde{\alpha}|^2_{L^{\frac{2p}{p-2}}(\Omega)}\notag
  \\
  &\leq 4\sqrt{2}(C_{H^1}^{L^{\frac{2p}{p-2}}})^2|\nabla\gamma|_{W^{1,\infty}(\R^2;\R^{2})}|\nabla u|^2_{[L^p(\Omega)]^2}|\alpha-\tilde{\alpha}|^2_{H^1(\Omega)}\notag
  \\
  &\leq 4\sqrt{2}(C_{H^1}^{L^{\frac{2p}{p-2}}})^2(1+C_P)^2|\nabla\gamma|_{W^{1,\infty}(\R^2;\R^{2})}|\nabla u|^2_{[L^p(\Omega)]^2}|\nabla(\alpha-\tilde{\alpha})|^2_{[L^2(\Omega)]^2},\label{I_1}
  \\[1.0ex]
  I_2&\leq 8|\nabla\gamma|_{W^{1,\infty}(\R^2;\R^{2})}|\nabla(u-\tilde{u})|_{[L^2(\Omega)]^2}|\nabla u|_{[L^p(\Omega)]^2}|\alpha-\tilde{\alpha}|_{L^{\frac{2p}{p-2}}(\Omega)}\notag
  \\
  &\leq 8C_{H^1}^{L^{\frac{2p}{p-2}}}|\nabla\gamma|_{W^{1,\infty}(\R^2;\R^{2})}|\nabla(u-\tilde{u})|_{[L^2(\Omega)]^2}|\nabla u|_{[L^p(\Omega)]^2}|\alpha-\tilde{\alpha}|_{H^1(\Omega)}\notag
  \\
  &\leq \frac{64}{\kappa}(C_{H^1}^{L^{\frac{2p}{p-2}}})^2(1+C_P)^2|\nabla\gamma|_{W^{1,\infty}(\R^2;\R^{2})}^2|\nabla u|_{[L^p(\Omega)]^2}|\nabla(u-\tilde{u})|_{[L^2(\Omega)]^2}^2 \notag
  \\
  &\hspace{8ex}+\frac{\kappa}{4}|\nabla(\alpha-\tilde{\alpha})|^2_{[L^2(\Omega)]^2},\label{I_2}
\\
  I_3&\leq 4|\nabla\gamma|_{W^{1,\infty}(\R^2;\R^{2})}|\nabla u|_{[L^p(\Omega)]^2}|\alpha-\tilde{\alpha}|^2_{L^{\frac{2p}{p-1}}(\Omega)}\notag
  \\
  &\leq 4(C_{H^1}^{L^{\frac{2p}{p-1}}})^2|\nabla\gamma|_{W^{1,\infty}(\R^2;\R^{2})}|\nabla u|_{[L^p(\Omega)]^2}|\alpha-\tilde{\alpha}|^2_{H^1(\Omega)}\notag
  \\
  &\leq 4(C_{H^1}^{L^{\frac{2p}{p-1}}})^2(1+C_P)^2|\nabla\gamma|_{W^{1,\infty}(\R^2;\R^{2})}|\nabla u|_{[L^p(\Omega)]^2}|\nabla(\alpha-\tilde{\alpha})|^2_{[L^2(\Omega)]^2},\label{I_3}
  \\[1.0ex]
  I_4&\leq 6|\nabla\gamma|_{W^{1,\infty}(\R^2;\R^{2})}|\nabla(u-\tilde{u})|_{[L^2(\Omega)]^2}|\alpha-\tilde{\alpha}|_{L^2(\Omega)}\notag
  \\
  &\leq \frac{36}{\kappa}(C_P)^2|\nabla\gamma|_{W^{1,\infty}(\R^2;\R^{2})}^2|\nabla(u-\tilde{u})|^2_{[L^2(\Omega)]^2}
  +\frac{\kappa}{4}|\nabla(\alpha-\tilde{\alpha})|^2_{[L^2(\Omega)]^2}.\label{I_4}
\end{align}%\noeqref{I_1,I_2,I_3}
Combing \eqref{lem004}--\eqref{I_4}, we arrive at 
\begin{gather*}
  \left(\frac{\kappa}{2}-C_1(u)\right)|\nabla(\alpha_1-\alpha_2)|^2_{[L^2(\Omega)]^2}
  \nonumber
  \\
  +\left(\frac{1}{\tau}-\frac{1}{C_2(u)}\right)\left(|u_1-u_2|^2_{L^2(\Omega)}+\mu|\nabla(u_1-u_2)|^2_{[L^2(\Omega)]^2}\right)\leq0, %\label{bddalpha}
\end{gather*}
where $C_1=C_1(u)$, $C_2=C_2(u)$ are positive constants depending on the minimizer $u$ to $\Psi_{_{\overline{w}}}$, given as:
\begin{align*}
  &C_1:=4\sqrt{2}(C_{H^1}^{L^{\frac{2p}{p-2}}}+C_{H^1}^{L^{\frac{2p}{p-1}}})^2(1+C_P)^2|\nabla\gamma|_{W^{1,\infty}(\R^2;\R^{2})}(1+|\nabla u|_{[L^p(\Omega)]^2})^2,
  \\
  &C_2:=\frac{{C_1}(1\land\mu)(1+|\nabla\gamma|_{W^{1,\infty}(\R^2;\R^{2})})^{-2}\left(1+|\nabla u|_{[L^p(\Omega)]^2}\right)^{-2}}{54(1+C_P)^2(1+C_{H^1}^{L^{\frac{2p}{p-2}}})^2(1+2C_1)}.
\end{align*}
Thus, if $\kappa$ and $\tau$ satisfy the following conditions:
\begin{align}\label{cond1}
  \kappa>2C_1, \mbox{ and } 0<\tau<C_2, 
\end{align}
then the solution to ($E; \alpha,u,\overline{w}$)$_\tau$ is unique. 
  \end{proof}
\end{lemma}
  \begin{proof}[Proof of Theorem \ref{003Thm1}]
    The existence of the solution $\{[\alpha^i, u^i]\}_{i=1}^m$ to (AP)$^\tau$ follows from Lemma \ref{003lemma1} with ${\overline{w}}:=u^{i-1}$. Moreover, the energy-inequality can be observed. In fact, since the solution $[\alpha^i, u^i]$ to $E(\alpha^i,u^i,u^{i-1})$ is the minimizer to $\Psi_i:=\Psi_{u^{i-1}}$, one can compute that
    \begin{align*}
      E(\alpha^i,u^i)&\leq\Psi_i(\alpha^i,u^i)\leq\Psi_{i}(\alpha^{i-1},u^{i-1})=E(\alpha^{i-1},\alpha^{i-1})\leq\Psi_{i-1}(\alpha^{i-1},\alpha^{i-1})
      \\
      &\leq\cdots\leq E(\alpha^1,u^1)\leq\Psi_1(\alpha^1,u^1)\leq\Psi_1(\alpha^0,u_0)= E(\alpha^0,u_0).
    \end{align*}
    This implies the energy-inequality. In particular, we have
    \begin{align}\label{lem003}
      \frac{\nu}{p}|\nabla u^i|^p_{[L^p(\Omega)]^2}\leq E(0,u_0), \mbox{ for any } i=1,2,\dots,m.
    \end{align}

    Now, our remaining task is to show the uniqueness. In this light, we set 
    \begin{equation}\label{kappahat}
      \widehat{\kappa}:=8\sqrt{2}(C_{H^1}^{L^{\frac{2p}{p-2}}}+C_{H^1}^{L^{\frac{2p}{p-1}}})^2(1+C_P)^2|\nabla\gamma|_{W^{1,\infty}(\R^2;\R^{2})}(1+\frac{\nu}{p}E(0,u_0))^\frac{2}{p},
      \end{equation}
      \begin{equation*}%\label{tauhat}
      \widehat{\tau}:=\frac{\widehat{\kappa}(1\land\mu)(1+|\nabla\gamma|_{W^{1,\infty}(\R^2;\R^{2})})^{-2}(1+(\frac{\nu}{p}E(0,u_0))^\frac{1}{p})^{-2}}{54(1+C_P)^2(1+C_{H^1}^{L^{\frac{2p}{p-2}}})^2(1+2\widehat{\kappa})}.
    \end{equation*}
    Then, according to the energy-inequality, we immediately see that when $\kappa > \widehat{\kappa}$ and $\tau < \widehat{\tau}$ satisfies the condition \eqref{cond1}. Therefore, owing to Lemma \ref{003lemma1}, we have the uniqueness, and thus, we complete the proof.
  \end{proof}

  \section{Proof of Main Theorem.}
The proof of Main Theorem is established by verifying several claims, which are divided into the following subsections:\medskip
\\
Subsection \ref{sub4.1} Uniqueness and continuous dependence of system (S).
\\
Subsection \ref{sub4.2} Existence of a limit $[\alpha,u]$ for a subsequence of $\{[\alpha]_\tau,[u]_\tau\}$.
\\
Subsection \ref{sub4.3} Existence and energy-dissipation for the system (S).
\\
Subsection \ref{sub4.4} Verification of Boundedness: $0\leq u \leq 1$ in $\overline{Q}$.
\subsection{Uniqueness and continuous dependence of system (S)}\label{sub4.1}
In this subsection, we show the uniqueness and continuous dependence of the solution to system (S). Let $[\alpha_k ,u_k](k=1,2)$ denote the solutions of the system (S) corresponding to the initial condition $u_1(0)=u_2(0)=u_0$. We consider the difference between the two variational identities for $\alpha_k$, and substitute $\varphi:=(\alpha_1-\alpha_2)(t)$. Using (A2), we can deduce that
\begin{align}
  & \frac{d}{dt}\Big(\kappa\int_0^t|\nabla(\alpha_1-\alpha_2)(t)|^2_{[L^2(\Omega)]^2}dt\Big)
  \nonumber
  \\
  &\leq4\sqrt{2}|\nabla^2\gamma|_{L^\infty(\R^2;\R^{2\times2})}\int_\Omega|\nabla u_1(t)|^2|(\alpha_1-\alpha_2)(t)|^2\,dx
  \nonumber
  \\
  & \quad+2|\nabla^2\gamma|_{L^\infty(\R^2;\R^{2\times2})}\int_\Omega|\nabla u_1(t)||\nabla(u_1-u_2)(t)||(\alpha_1-\alpha_2)(t)|\,dx
  \nonumber
  \\
  &\quad+4|\nabla\gamma|_{L^\infty(\R^2;\R^2)}\int_\Omega|\nabla u_1(t)||(\alpha_1-\alpha_2)(t)|^2\,dx
  \nonumber
  \\
  &\quad+\sqrt{2}|\nabla\gamma|_{L^\infty(\R^2;\R^2)}\int_\Omega|\nabla(u_1-u_2)(t)||(\alpha_1-\alpha_2)(t)|\,dx
  \nonumber
  \\
  &=:J_1+J_2+J_3+J_4.
\label{unique01}
\end{align}
Here, using similar argument as in \eqref{I_1}--\eqref{I_4} with \eqref{lem003}, we can estimate the integral terms $J_k$, $k = 1,2,3,4$, appearing in \eqref{unique01} as follows:
\begin{align}
  J_1 &\leq4\sqrt{2}|\nabla\gamma|_{W^{1,\infty}(\R^2;\R^2)}|\nabla u_1(t)|^2_{[L^p(\Omega)]^2}|(\alpha_1-\alpha_2)(t)|^2_{L^{\frac{2p}{p-2}}(\Omega)}
  \nonumber
  \\
%  &\leq4\sqrt{2}\left(\frac{p}{\nu}E(0,u_0)\right)^{\frac{2}{p}}(C_{H^1}^{L^{\frac{2p}{p-2}}})^2|\nabla\gamma|_{W^{1,\infty}(\R^2;\R^2)}|(\alpha_1-\alpha_2)(t)|^2_{H^1(\Omega)}\nonumber
%  \\
  &\leq4\sqrt{2}\left(\frac{p}{\nu}E(0,u_0)\right)^{\frac{2}{p}}(C_{H^1}^{L^{\frac{2p}{p-2}}})^2(1+C_P)^2|\nabla\gamma|_{W^{1,\infty}(\R^2;\R^2)}|\nabla(\alpha_1-\alpha_2)(t)|^2_{[L^2(\Omega)]^2},
  \label{unique02}
  \\[2ex]
  J_2&\leq2|\nabla\gamma|_{W^{1,\infty}(\R^2;\R^2)}|\nabla u_1(t)|_{[L^p(\Omega)]^2}|\nabla(u_1-u_2)(t)|_{[L^2(\Omega)]^2}|(\alpha_1-\alpha_2)(t)|_{L^{\frac{2p}{p-2}}(\Omega)}
  \nonumber
  \\
%  &\leq2{C_{H^1}^{L^{\frac{2p}{p-2}}}}|\nabla\gamma|_{W^{1,\infty}(\R^2;\R^2)}|\nabla u_1(t)|_{[L^p(\Omega)]^2}|\nabla(u_1-u_2)(t)|_{[L^2(\Omega)]^2} \cdot \nonumber
%    \\
%    &\qquad \qquad \cdot |(\alpha_1-\alpha_2)(t)|_{H^1(\Omega)}\nonumber
%    \\
    &\leq \frac{8}{\kappa}(C_{H^1}^{L^{\frac{2p}{p-2}}})^2(1+C_P)^2|\nabla\gamma|_{W^{1,\infty}(\R^2;\R^2)}^2|\nabla u_1(t)|_{[L^p(\Omega)]^2}^2|\nabla(u_1-u_2)(t)|_{[L^2(\Omega)]^2}^2 
    \nonumber
    \\
    &\qquad+\frac{\kappa}{8}|\nabla(\alpha_1-\alpha_2)(t)|^2_{[L^2(\Omega)]^2},
    \label{unique03}
\end{align}
\begin{align}
  J_3&\leq4|\nabla\gamma|_{W^{1,\infty}(\R^2;\R^2)}|\nabla u_1(t)|_{[L^p(\Omega)]^2}|(\alpha_1-\alpha_2)(t)|^2_{L^{\frac{2p}{p-1}}(\Omega)}
  \nonumber
  \\
  &\leq 4\left(\frac{p}{\nu}E(0,u_0)\right)^{\frac{1}{p}}(C_{H^1}^{L^{\frac{2p}{p-1}}})^2(1+C_P)^2|\nabla\gamma|_{W^{1,\infty}(\R^2;\R^2)}|\nabla(\alpha_1-\alpha_2)(t)|^2_{[L^2(\Omega)]^2},
  \label{unique04}
\\
    J_4 & \leq \frac{4}{\kappa}(1+C_P)^2|\nabla\gamma|_{W^{1,\infty}(\R^2;\R^2)}^2|\nabla(u_1-u_2)(t)|_{[L^2(\Omega)]^2}^2+\frac{\kappa}{8}|\nabla(\alpha_1-\alpha_2)(t)|^2_{[L^2(\Omega)]^2},
  \label{unique05}
\end{align}
where, $C_{H1}^{L^{\frac{2p}{p-2}}}$ and $C_{H1}^{L^{\frac{2p}{p-1}}}$ denote the constants corresponding to the two-dimensional Sobolev embeddings $H^1(\Omega) \subset L^{\frac{2p}{p-2}}(\Omega)$ and $H^1(\Omega) \subset L^{\frac{2p}{p-1}}(\Omega)$, respectively, and $C_P$ denotes the constant associated with Poincar\'e's inequality.

\noindent
Combining \eqref{unique01}--\eqref{unique05}, and recording the setting of $\widehat{\kappa}$ as in \eqref{kappahat}, we arrive at
\begin{align}
  & \frac{d}{dt}\left(\left(\frac{3\kappa}{4}-\frac{\widehat{\kappa}}{2}\right)\int_0^t|\nabla(\alpha_1-\alpha_2)(t)|^2_{[L^2(\Omega)]^2}dt\right)
  \label{uni001}
  \\
  &~\leq C_3\bigl(1+|\nabla u_1(t)|_{[L^p(\Omega)]^2}\bigr)^2\bigl(|(u_1-u_2)(t)|^2_{L^2(\Omega)}+\mu|\nabla(u_1-u_2)(t)|^2_{[L^2(\Omega)]^2}\bigr), \nonumber
  \\
  &\hspace{30ex}\mbox{ a.e. } t \in (0,T),
  \nonumber
\end{align}
where 
\begin{align*}
    C_3:=\frac{8(1+C_{H^1}^{L^{\frac{2p}{p-2}}})^2(1+C_P)^2|\nabla\gamma|^2_{W^{1,\infty}(\R^2;\R^2)}}{\widehat{\kappa}(1\land\mu)}.
\end{align*}

On the other hand, by putting $ \psi = u_2 $ in the variational inequality for $ u_1 $ and $ \psi = u_1 $ in the one for $ u_2 $, and then summing the two inequalities, we can see that 
\begin{align*}
  &\frac{1}{2}\frac{d}{dt}\bigl(|(u_1-u_2)(t)|^2_{L^2(\Omega)}+\mu|\nabla(u_1-u_2)(t)|^2_{[L^2(\Omega)]^2}\bigr)+\lambda|(u_1-u_2)(t)|^2_{L^2(\Omega)}
  \\
  &+\int_\Omega(|\nabla u_1(t)|^{p-2}\nabla u_1(t)-|\nabla u_2(t)|^{p-2}\nabla u_2(t))\cdot\nabla(u_1-u_2)(t)\,dx 
  \\
  &-\int_\Omega\left[{^\top}R(\alpha_1(t))\nabla\gamma(R(\alpha_1(t))\nabla u_1(t))-{^\top}R(\alpha_2(t))\nabla\gamma(R(\alpha_2(t))\nabla u_2(t))\right]\cdot
  \\
  & \qquad\cdot\nabla(u_1-u_2)(t)\,dx\leq0,\mbox{ a.e. }t\in(0,T).
\end{align*}
Also, by applying (A2), Tartar's inequality for $p$-Laplace operator (cf. \cite[Lemma 2.1]{MR2802983}), and H\"older's inequality, we obtain that
\begin{align}
  & \frac{1}{2}\frac{d}{dt}\bigl(|(u_1-u_2)(t)|^2_{L^2(\Omega)}+\mu|\nabla(u_1-u_2)(t)|^2_{[L^2(\Omega)]^2}\bigr)
  \nonumber
  \\
  &\leq4|\nabla\gamma|_{L^\infty(\R^2;\R^{2})}\int_\Omega|\nabla(u_1-u_2)(t)||(\alpha_1-\alpha_2)(t)|\,dx 
  \nonumber
  \\
  &\quad+4\sqrt{2}|\nabla^2\gamma|_{L^\infty(\R^2;\R^{2\times2})}\int_\Omega|\nabla u_1(t)||\nabla(u_1-u_2)(t)||(\alpha_1-\alpha_2)(t)|\,dx
  \nonumber
  \\
  &\quad+2|\nabla^2\gamma|_{L^\infty(\R^2;\R^{2\times2})}|\nabla(u_1-u_2)(t)|^2_{[L^2(\Omega)]^2}
  \nonumber
  \\
  &=: J_5+J_6+2|\nabla^2\gamma|_{L^\infty(\R^2;\R^{2\times2})}|\nabla(u_1-u_2)(t)|^2_{[L^2(\Omega)]^2}.
  \label{unique06}
\end{align}

Here, by using the two-dimensional Sobolev embedding $H^1(\Omega) \subset L^{\frac{2p}{p-2}}(\Omega)$, Poincar\'e's inequality and Young's inequality, the integral terms $J_5$ and $J_6$ in \eqref{unique06} can be calculated as follows:
\begin{align}
  J_5&\leq 4|\nabla\gamma|_{W^{1,\infty}(\R^2;\R^2)}|\nabla(u_1-u_2)(t)|_{[L^2(\Omega)]^2}|(\alpha_1-\alpha_2)(t)|_{L^2(\Omega)},\nonumber
  \\
  &\leq \frac{32}{\kappa}(1+C_P)^2|\nabla\gamma|_{W^{1,\infty}(\R^2;\R^2)}^2|\nabla(u_1-u_2)(t)|_{[L^2(\Omega)]^2}^2+\frac{\kappa}{8}|\nabla(\alpha_1-\alpha_2)(t)|^2_{[L^2(\Omega)]^2},
  \label{unique07}
\\
  J_6&\leq4\sqrt{2}|\nabla\gamma|_{W^{1,\infty}(\R^2;\R^2)}|\nabla u_1(t)|_{[L^p(\Omega)]^2}|\nabla(u_1-u_2)(t)|_{[L^2(\Omega)]^2}|(\alpha_1-\alpha_2)(t)|_{L^{\frac{2p}{p-2}}(\Omega)}
  \nonumber
  \\
    &\leq \frac{64}{\kappa}({C_{H^1}^{L^{\frac{2p}{p-2}}}})^2(1+C_P)^2|\nabla\gamma|^2_{W^{1,\infty}(\R^2;\R^2)}|\nabla u_1(t)|_{[L^p(\Omega)]^2}^2|\nabla(u_1-u_2)(t)|_{[L^2(\Omega)]^2}^2 
    \nonumber
    \\
    &\qquad\qquad +\frac{\kappa}{8}|\nabla(\alpha_1-\alpha_2)(t)|^2_{[L^2(\Omega)]^2},
     \label{unique08}
\end{align}%\noeqref{unique07,unique08}
where, $C_{H^1}^{L^{\frac{2p}{p-2}}}$ and $C_{H^1}^{L^{\frac{2p}{p-1}}}$ stand for the embedding constants of $H^1(\Omega) \subset L^{\frac{2p}{p-2}}(\Omega)$ and $H^1(\Omega) \subset L^{\frac{2p}{p-1}}(\Omega)$, respectively, while $C_P$ represents the constant associated with Poincar\'e's inequality.

\noindent
Combining \eqref{unique06}--\eqref{unique08}, we arrive at 
\begin{align}
  &\frac{d}{dt}\bigl(|(u_1-u_2)(t)|^2_{L^2(\Omega)}+\mu|\nabla(u_1-u_2)(t)|^2_{[L^2(\Omega)]^2}\bigr)
  \nonumber
  \\
  &\quad\leq C_4\bigl(1+|\nabla u_1(t)|_{[L^p(\Omega)]^2}\bigr)^2\bigl(|(u_1-u_2)(t)|^2_{L^2(\Omega)}+\mu|\nabla(u_1-u_2)(t)|^2_{[L^2(\Omega)]^2}\bigr)
  \nonumber
  \\
   &\qquad\qquad\quad+\frac{\kappa}{4}|\nabla(\alpha_1-\alpha_2)(t)|^2_{[L^2(\Omega)]^2},\mbox{ a.e. }t\in(0,T),\label{uni002}
\end{align}where
\begin{gather*}
  C_4:=\frac{64\bigl(1+C_{H^1}^{L^{\frac{2p}{p-2}}}\bigr)^2(1+C_P)^2(1+|\nabla\gamma|_{W^{1,\infty}(\R^2;\R^{2})})^2(1+\widehat{\kappa})}{\widehat{\kappa}(1\land\mu)}.
\end{gather*}
Now, we set
\begin{align*}
  \kappa_*:=\widehat{\kappa}, \mbox{ and } C_*:=C_3+C_4.
\end{align*}
Then, by adding inequalities \eqref{uni001} and \eqref{uni002}, we arrive at the desired inequality \eqref{cncl_J}.
\subsection{Existence of a limit $[\alpha,u]$ for a subsequence of $\{[\alpha]_\tau,[u]_\tau\}$}\label{sub4.2}
First, taking into account Notation \ref{deftseq} and Example \ref{ex1}, we observe that
\begin{gather}
  \nabla\gamma(R([\overline{\alpha}]_\tau(t))\nabla [\overline{u}]_\tau(t))\in\partial\Phi(R( [\overline{\alpha}]_\tau(t))\nabla [\overline{u}]_\tau(t))\mbox{ in }[L^2(\Omega)]^2,
  \label{subdig1}
  \\
  \mbox{ for a.e. }t\in(0,T),
  \nonumber
\end{gather}
and hence, 
\begin{gather}
  \nabla\gamma(R([\overline{\alpha}]_\tau)\nabla [\overline{u}]_\tau)\in\partial\widehat{\Phi}^I(R( [\overline{\alpha}]_\tau)\nabla [\overline{u}]_\tau)\mbox{ in }L^2(0,T;[L^2(\Omega)]^2),
  \label{subdig2}
  \\
  \mbox{ any open interval }I\subset(0,T),
  \nonumber
\end{gather}
where  $[{u}]_\tau$, $ [\overline{u}]_\tau $, and $ [\underline{u}]_\tau $  are the time-interpolation of $ \{ u^i \}_{i = 1}^m $ as in Notation \ref{deftseq}, and $[{\alpha}]_\tau$, $ [\overline{\alpha}]_\tau $, and $ [\underline{\alpha}]_\tau $ are those of $ \{ \alpha^i\}_{i = 1}^m $,  for $ \tau \in (0, \widehat{\tau}) $.
\medskip

From Theorem \ref{003Thm1}, since $\kappa$ is greater than $\widehat{\kappa}$, we obtain the following boundedness:
\begin{enumerate}
  \item[{\bf(B-1)}] $\{[\overline{\alpha}]_{\tau}~|~\tau\in(0,\widehat{\tau})\}$ and $\{[\underline{\alpha}]_{\tau}~|~\tau\in(0,\widehat{\tau})\}$ are bounded in $L^\infty(0,T;H^1_0(\Omega))$,
  \item[{\bf(B-2)}] $\{[u]_{\tau}~|~\tau\in(0,\widehat{\tau})\}$ is bounded in $L^\infty(0,T;W^{1,p}_0(\Omega))$ and in $W^{1,2}(0,T;H^1_0(\Omega))$,
  \item[{\bf(B-3)}] $\{[\overline{u}]_{\tau}~|~\tau\in(0,\widehat{\tau})\}$ and $\{[\underline{u}]_{\tau}~|~\tau\in(0,\widehat{\tau})\}$ are bounded in $L^\infty(0,T;W^{1,p}_0(\Omega))$,
  \item[{\bf(B-4)}] $\{\nabla\gamma(R([\overline{\alpha}]_\tau)\nabla [\overline{u}]_\tau)~|~\tau\in(0,\widehat{\tau})\}$ is bounded in $L^\infty(Q;\R^2)$,
  \item[{\bf(B-5)}] The function of time $t\in[0,T]\mapsto E([\overline{\alpha}]_{\tau}(t),[\overline{u}]_{\tau}(t))\in[0,\infty)$ and $t\in [0,T]\mapsto E([\underline{\alpha}]_{\tau}(t),[\underline{u}]_{\tau}(t))\in[0,\infty)$ are nonincreasing for every $0<\tau<\widehat{\tau}$. Moreover, $E({\alpha}^{0},u_0)$ is bounded, and hence, the class $\{E([\overline{u}]_{\tau},[\overline{\alpha}]_{\tau}),$ $E([\underline{u}]_{\tau},[\underline{\alpha}]_{\tau})~|~\tau\in(0,\widehat{\tau})\} $ is bounded in $BV(0,T)$.
\end{enumerate}
However, to handle the convergences of terms arisen by the structure of composition function of $\alpha$, we need to derive convergences of $\alpha$ in strong topology.

In view of this, we consider deriving additional boundedness of $\alpha$. Here, by setting $\varphi:=\frac{1}{\tau}(\alpha^i-\alpha^{i-1})$ in (AP1;$\alpha^i$,$u^i$)$-$(AP1;$\alpha^{i-1}$,$u^{i-1}$), we can see that 
\begin{align}
  \frac{\kappa}{\tau}|\nabla(\alpha^i-\alpha^{i-1})|&\leq\frac{4\sqrt{2}}{\tau}|\nabla^2\gamma|_{L^\infty(\R^2;\R^{2\times2})}\int_\Omega |\nabla u^i|^2|\alpha^i-\alpha^{i-1}|^2\,dx\nonumber
  \\
  &\quad+\frac{2}{\tau}|\nabla^2\gamma|_{L^\infty(\R^2;\R^{2\times2})}\int_\Omega |\nabla(u^i-u^{i-1})||\nabla u^i||\alpha^i-\alpha^{i-1}|\,dx\nonumber
  \\
  &\quad+\frac{4}{\tau}|\nabla\gamma|_{L^\infty(\R^2;\R^2)}\int_\Omega |\nabla u^i||\alpha^i-\alpha^{i-1}|^2\,dx\nonumber
  \\
  &\quad+\frac{\sqrt{2}}{\tau}|\nabla\gamma|_{L^\infty(\R^2;\R^2)}\int_\Omega |\nabla(u^i-u^{i-1})||\alpha^i-\alpha^{i-1}|\,dx\nonumber
  \\
  &=:J_7+J_8+J_9+J_{10}.\label{difference_of_alpha}
\end{align}
Here, by arguments similar to those used in \eqref{I_1}--\eqref{I_4} and applying \eqref{lem003}, the integral terms $J_k$, $k=7,8,9,10$, appearing in \eqref{difference_of_alpha} can be estimated as follows:
\begin{align}
  J_7&\leq\frac{4\sqrt{2}}{\tau}|\nabla\gamma|_{W^{1,\infty}(\R^2;\R^{2})}|\nabla u^i|^2_{[L^p(\Omega)]^2}|\alpha^i-\alpha^{i-1}|_{L^{\frac{2p}{p-2}}(\Omega)}^2\notag
  \\
%  &\leq \frac{4\sqrt{2}}{\tau}\left(\frac{p}{\nu}E(0,u_0)\right)^{\frac{2}{p}}(C_{H^1}^{L^{\frac{2p}{p-2}}})^2|\nabla\gamma|_{W^{1,\infty}(\R^2;\R^{2})}|\alpha^i-\alpha^{i-1}|^2_{H^1(\Omega)}\notag
%  \\
  &\leq \frac{4\sqrt{2}}{\tau}\left(\frac{p}{\nu}E(0,u_0)\right)^{\frac{2}{p}}(C_{H^1}^{L^{\frac{2p}{p-2}}})^2(1+C_P)^2|\nabla\gamma|_{W^{1,\infty}(\R^2;\R^{2})}|\nabla(\alpha^i-\alpha^{i-1})|^2_{[L^2(\Omega)]^2},\label{J_7}
  \\[1.0ex]
    J_8&\leq\frac{2}{\tau}|\nabla\gamma|_{W^{1,\infty}(\R^2;\R^{2})}|\nabla u^i|_{[L^p(\Omega)]^2}|\nabla(u^i-u^{i-1})|_{[L^2(\Omega)]^2}|\alpha^i-\alpha^{i-1}|_{L^{\frac{2p}{p-2}}(\Omega)}\notag
  \\
%  &\leq \frac{2}{\tau}C_{H^1}^{L^{\frac{2p}{p-2}}}|\nabla\gamma|_{W^{1,\infty}(\R^2;\R^{2})}|\nabla u^{i}|_{[L^p(\Omega)]^2}|\nabla(u^{i}-u^{i-1})|_{[L^2(\Omega)]^2}|\alpha^i-\alpha^{i-1}|_{H^1(\Omega)}\notag
%  \\
  &\leq \frac{4}{\kappa\tau}\left(\frac{p}{\nu}E(0,u_0)\right)^\frac{2}{p}(C_{H^1}^{L^{\frac{2p}{p-2}}})^2(1+C_P)^2|\nabla\gamma|_{W^{1,\infty}(\R^2;\R^{2})}^2|\nabla(u^{i}-u^{i-1})|_{[L^2(\Omega)]^2}^2 \notag
  \\
  &\qquad +\frac{\kappa}{4\tau}|\nabla(\alpha^i-\alpha^{i-1})|^2_{[L^2(\Omega)]^2},\label{J_8}
\\
  J_9&\leq\frac{4}{\tau}|\nabla\gamma|_{W^{1,\infty}(\R^2;\R^{2})}|\nabla u^i|_{[L^p(\Omega)]^2}|\alpha^i-\alpha^{i-1}|^2_{L^{\frac{2p}{p-1}}(\Omega)}\notag
  \\
%  &\leq \frac{4}{\tau}(C_{H^1}^{L^{\frac{2p}{p-1}}})^2|\nabla\gamma|_{W^{1,\infty}(\R^2;\R^{2})}|\nabla u^{i}|_{[L^p(\Omega)]^2}|\alpha^i-\alpha^{i-1}|^2_{H^1(\Omega)}\notag
%  \\
  &\leq \frac{4}{\tau}\left(\frac{p}{\nu}E(0,u_0)\right)^{\frac{1}{p}}(C_{H^1}^{L^{\frac{2p}{p-1}}})^2(1+C_P)^2|\nabla\gamma|_{W^{1,\infty}(\R^2;\R^{2})}|\nabla(\alpha^i-\alpha^{i-1})|^2_{[L^2(\Omega)]^2},\label{J_9}
  \end{align}
\begin{align}
  J_{10}&\leq\frac{\sqrt{2}}{\tau}|\nabla\gamma|_{W^{1,\infty}(\R^2;\R^{2})}\int_\Omega |\nabla(u^i-u^{i-1})||\alpha^i-\alpha^{i-1}|\,dx\notag
  \\
  &\leq \frac{2}{\kappa\tau}(1+C_P)^2|\nabla\gamma|_{W^{1,\infty}(\R^2;\R^{2})}^2|\nabla(u^{i}-u^{i-1})|^2_{[L^2(\Omega)]^2}
  +\frac{\kappa}{4\tau}|\nabla(\alpha^i-\alpha^{i-1})|^2_{[L^2(\Omega)]^2}.\label{J_10}
\end{align}
where, $C_{H^1}^{L^{\frac{2p}{p-2}}}$ and $C_{H^1}^{L^{\frac{2p}{p-1}}}$ are the embedding constants associated with $H^1(\Omega) \subset L^{\frac{2p}{p-2}}(\Omega)$ and $H^1(\Omega) \subset L^{\frac{2p}{p-1}}(\Omega)$, respectively, and $C_P$ is the Poincar\'e's inequality constant.

\noindent
Here, we set:
\begin{align*}
  C_5:=\frac{4\left(1+(\frac{p}{\nu}E(0,u_0))^\frac{1}{p}\right)^2(1+C_P)^2(1+C_{H^1}^{L^{\frac{2p}{p-2}}})^2|\nabla\gamma|_{W^{1,\infty}(\R^2;\R^{2})}^2}{\widehat{\kappa}\mu}.
\end{align*}
Then, by combing \eqref{difference_of_alpha}--\eqref{J_10} and using \eqref{f-ene0}, we arrive at 
\begin{align*}
  \frac{1}{\tau}\left(\frac{\kappa}{2}-\frac{\widehat{\kappa}}{2}\right)|\nabla(\alpha^i-\alpha^{i-1})|^2_{[L^2(\Omega)]^2}&\leq C_5\frac{\mu}{\tau}|\nabla(u^i-u^{i-1})|_{[L^2(\Omega)]^2},
\end{align*}
and hence, 
\begin{align*}
  \frac{\kappa-\widehat{\kappa}}{2\tau}\sum_{i=1}^k|\nabla(\alpha^i-\alpha^{i-1})|^2_{[L^2(\Omega)]^2}&\leq C_5\frac{\mu}{\tau}\sum_{i=1}^k|\nabla(u^i-u^{i-1})|_{[L^2(\Omega)]^2}
  \\
  &\leq C_5 E(0,u_0), 
  \\
  \mbox{ for any } k = 1,\dots,m, &\mbox{ for any } \tau\in(0,\widehat{\tau}).
\end{align*}
Therefore, since $\kappa$ is greater than $\widehat{\kappa}$, we derive the following additional boundedness:
\begin{description}
  \item[(B-6)]$\{[\alpha]_{\tau}~|~\tau\in(0,\widehat{\tau})\}$ is bounded in $W^{1,2}(0,T;H^1_0(\Omega))$.
\end{description}
By virtue of (B-1)--(B-4), (B-6) and 2D-embedding $ W^{1, p}(\Omega) \subset C(\overline{\Omega}) $ with $ p > 2 $, we can apply the compactness theory of Aubin's type \cite[Corollary 4]{MR0916688}, and obtain a sequence $ \{ \tau_n \}_{n\in\N} \subset (0, \widehat{\tau});\tau_n\downarrow0 $, and a pair of functions $[{\alpha},u]\in[L^2(0,T;L^2(\Omega))]^2$ with $\bm{w}^*\in L^\infty(Q;\R^2)$ such that: 
\begin{align}
  &\alpha_n:=[\alpha]_{\tau_n}\rightarrow\alpha\mbox{ in }C([0,T];L^2(\Omega)),\mbox{ weakly in }W^{1,2}(0,T;H^1_0(\Omega)),
  \nonumber
  \\
  &\hspace*{10ex}\mbox{ weakly-}*\mbox{in }L^\infty(0,T;H^1_0(\Omega)), \mbox{ as $ n \to \infty $,}
  \label{conv_1}
  \\[1ex]
  & u_n : =  [u]_{\tau_n}\rightarrow u\mbox{ in }C(\overline{Q}),\mbox{ weakly in } W^{ 1 , 2 }( 0,T ; H^1_0 ( \Omega)),
  \nonumber 
  \\
  &\hspace*{10ex}\mbox{ weakly-}* \mbox{ in } L^\infty( 0,T ; W^{ 1 , p }_0 ( \Omega ) ) , \mbox{ as $ n \to \infty $,} 
  \label{conv_2}
\end{align}
and
\begin{align}
    &\nabla\gamma(R([\overline{\alpha}]_{\tau_n})\nabla [\overline{u}]_{\tau_n})~\rightarrow\bm{w}^*\mbox{ weakly-}*\mbox{in }L^\infty(Q;\R^2), \mbox{ as $ n \to \infty $,}
  \label{conv_3}
\end{align}
in particular,
\begin{align}\label{conv_4}
  u(0)=\lim_{n\rightarrow\infty}u_n(0)=u_0\mbox{ in }[L^2(\Omega)]^2.
\end{align}
Here, since
\begin{align*}
  &\left\{
  \begin{aligned}
    &\max\{|([\overline{\alpha}]_{\tau_n}-\alpha_{n})(t)|_{H^1(\Omega)},|([\underline{\alpha}]_{\tau_n}-\alpha_{n})(t)|_{H^1(\Omega)}\}
    \\
    &\quad\leq \int_{t_{i-1}}^{t_i}|\partial_t\alpha_{n}(t)|_{H^1(\Omega)}\,dt\leq \tau_n^{\frac{1}{2}}|\partial_t\alpha_{n}|_{L^2(0,T;H^1(\Omega))},
    \\
    &\max\{|([\overline{u}]_{\tau_n}-u_{n})(t)|_{H^1(\Omega)},|([\underline{u}]_{\tau_n}-u_{n})(t)|_{H^1(\Omega)}\}
    \\
    &\quad\leq \int_{t_{i-1}}^{t_i}|\partial_tu_{n}(t)|_{H^1(\Omega)}\,dt\leq \tau_n^{\frac{1}{2}}|\partial_tu_{n}|_{L^2(0,T;H^1(\Omega))}, 
  \end{aligned}
  \right.
  \\
  &\qquad \mbox{ for }t\in[t_{i-1},t_i),~i=1,\dots,m, \mbox{ and }\tau\in(0,\widehat{\tau}),
\end{align*}
one can also see from \eqref{conv_1} and \eqref{conv_2} that:
\begin{align}
  &\overline{\alpha}_{n}:=[\overline{\alpha}]_{\tau_n}\rightarrow\alpha,~\underline{\alpha}_{n}:=[\underline{\alpha}]_{\tau_n}\rightarrow\alpha\mbox{ in }L^\infty ( 0,T ; L^2 ( \Omega ) ),
  \nonumber
  \\
  &\hspace*{10ex}\mbox{ weakly-}*\mbox{ in }L^\infty(0,T;H^1_0(\Omega)),
  \label{conv_5}
  \\[1ex]
  &\overline{u}_{n}:=[\overline{u}]_{\tau_n}\rightarrow u,~\underline{u}_{n}:=[\underline{u}]_{\tau_n}\rightarrow u \mbox{ in }L^\infty(0,T;L^2(\Omega)),
  \nonumber
  \\
  &\hspace*{10ex}\mbox{ weakly-}* \mbox{ in }L^\infty(0,T;W^{1,p}_0(\Omega)),
  \label{conv_6}
\end{align}
and in particular,
\begin{align}\label{conv_7}
&\left\{
  \begin{aligned}
    &\overline{\alpha}_{n}(t)\rightarrow \alpha(t), \underline{\alpha}_{n}\rightarrow \alpha(t)\mbox{ in }L^2(\Omega) \mbox{ and weakly in }H^1_0(\Omega),
    \\
    &\overline{u}_{n}(t)\rightarrow u(t), \underline{u}_{n}(t)\rightarrow u(t)\mbox{ in }L^p(\Omega) \mbox{ and weakly in }W^{1,p}_0(\Omega),
  \end{aligned}\right.
  \\
  &\hspace{15ex}\mbox{ as } n\rightarrow\infty,\mbox{ for any }t\in(0,T).
  \nonumber
\end{align}
Moreover, (B-5) enable us to see
\begin{gather}\label{conv_8}
    \bigl| E(\overline{\alpha}_n,\overline{u}_n) -E(\underline{\alpha}_n,\underline{u}_n) \bigr|_{L^1(0, T)} \leq 2E(\alpha^0,u_0) \tau_n \to 0, \mbox{ as $ n \to \infty $}.
\end{gather}
So, applying Helly's selection theorem \cite[Chapter 7, p.167]{rudin1976principles}, we will find a bounded and nonincreasing function $\mathcal{J}_*:[0,T]\mapsto[0,\infty)$, such that 
\begin{equation}\label{conv_9}
    \left\{
    \begin{aligned}
    &E(\overline{\alpha}_{n},\overline{u}_{n})\rightarrow\mathcal{J}_* \mbox{ and } E(\underline{\alpha}_{n},\underline{u}_{n})\rightarrow\mathcal{J}_* 
  \\
  & \qquad \mbox{ weakly-}*\mbox{ in }BV(0,T),\mbox{ and }\mbox{weakly-}* \mbox{ in }L^\infty(0,T),
  \\
    &E (\overline{\alpha}_{n}(t),\overline{u}_{n}(t)) \rightarrow \mathcal{J}_*(t) \mbox{ and } E (\underline{\alpha}_{n}(t),\underline{u}_{n}(t)) \rightarrow \mathcal{J}_*(t), \mbox{ for any }t\in[0,T], 
    \end{aligned}
    \right.
\end{equation}%\noeqref{conv_3,conv_4,conv_5}
as $ n \to \infty $, by taking a subsequence if necessary.

Thus, the assertion in subsection \ref{sub4.2} has been established.
\begin{rem}
  If (B-6) is not derived, then the limits of the sequences $\{\overline{\alpha}_n\}_{n\in\N}$ and $\{\underline{\alpha}_n\}_{n\in\N}$ in \eqref{conv_5} do not coincide.
\end{rem}
\subsection{Existence and energy-dissipation for the system (S)}\label{sub4.3}
Now, let us prove that the pair of functions $[\alpha,u]$ is a solution to the system (S). Condition (S3) can be verified by using \eqref{conv_4}. Next, we proceed to show that $[\alpha,u]$ satisfies the variational inequalities (S1) and (S2). For any $ t \in [0, T] $, it follows from (AP1;$\alpha^i,u^i$) and (AP2;$\alpha^i,u^i$) that the sequences given in \eqref{conv_1}--\eqref{conv_6} satisfy the following inequalities:
\begin{align}
  &\int_0^t\int_\Omega\nabla\gamma(R(\overline{\alpha}_{n}(\sigma))\nabla\overline{u}_{n}(\sigma))\cdot R\left(\overline{\alpha}_{n}(\sigma)+{\ts\frac{\pi}{2}}\right)\nabla \overline{u}_{n}(\sigma)\omega(\sigma)\,dxd\sigma
  \nonumber 
  \\ 
  &\quad+\kappa\int_{0}^{t}(\nabla\overline{\alpha}_{n}(\sigma),\nabla\omega(\sigma))_{[L^2(\Omega)]^2}\,d\sigma=0,~~\mbox{ for any }\omega\in L^2(0,T;H^1_0(\Omega)),
  \label{conv_10}
\end{align}
and, 
\begin{align}
  &\int_0^t (\partial_tu_n(\sigma),\overline{u}_n(\sigma)-w(\sigma))_{L^2(\Omega)}\,d\sigma\nonumber
+\mu\int_0^t(\nabla\partial_tu_n(\sigma),\nabla(\overline{u}_n-w)(\sigma))_{[L^2(\Omega)]^2}\,d\sigma
  \nonumber
  \\
  &\quad+{\nu}\int_0^t\int_{\Omega}|\nabla \overline{u}_n(\sigma)|^{p-2}\nabla \overline{u}_n(\sigma)\cdot\nabla(\overline{u}_{n}-w)(\sigma)\,dxd\sigma
  \nonumber
  \\
  &\quad+\lambda\int_0^t(\overline{u}_{n}(\sigma)-u_{\mathrm{org}},(\overline{u}_{n}-w)(\sigma))_{L^2(\Omega)}\,d\sigma
  \nonumber
  \\
  &\quad+\int_0^t\int_\Omega\gamma(R(\overline{\alpha}_{n}(\sigma))\nabla\overline{u}_{n} (\sigma))\,dxd\sigma\leq\int_0^t\int_\Omega\gamma(R(\overline{\alpha}_{n}(\sigma))\nabla w(\sigma))\,dxd\sigma,
  \nonumber
  \\
  &\hspace{25ex}\mbox{ for any }w\in L^2(0,T;W^{1,p}_0(\Omega)).\label{conv_11}
\end{align}
Here, it follows from \eqref{conv_1} and \eqref{conv_2} that
\begin{align}
  &\liminf_{n\rightarrow\infty}\int_0^t|\nabla\overline{u}_n(\sigma)|_{[L^p(\Omega)]^2}^p\,d\sigma\geq\int_0^t|\nabla{u}(\sigma)|_{[L^p(\Omega)]^2}^p\,d\sigma,
  \label{conv_12}
  \\
  &\liminf_{n\rightarrow\infty}\int_0^t|\nabla\overline{\alpha}_n(\sigma)|_{[L^2(\Omega)]^2}^2\,d\sigma\geq\int_0^t|\nabla{\alpha}(\sigma)|_{[L^2(\Omega)]^2}^2\,d\sigma.
  \label{conv_13}
\end{align}
Also, by using \eqref{conv_1} and \eqref{conv_2}, weakly lower-semicontinuity of $\gamma$ and Fatou's lemma, one can see that 
\begin{align}
\liminf_{n\rightarrow\infty}\int_{0}^{t}\int_\Omega\gamma(R(\overline{\alpha}_n(\sigma)\nabla \overline{u}_n(\sigma)))\,dxd\sigma
  \geq \int_0^t\int_\Omega\gamma(R({\alpha}(\sigma))\nabla{u}(\sigma))\,dxd\sigma.
\label{conv_14} 
\end{align}
Furthermore, by setting $m_t:=\bigl([\frac{t}{\tau}]+1\bigr)\land \frac{T}{\tau}$, where $ [\frac{t}{\tau}] \in \mathbb{Z} $ denotes the integer part of $ \frac{t}{\tau} \in \R $, we observe that
\begin{align*}
  &\int_0^t(\nabla\partial_tu_n(\sigma),\nabla\overline{u}_n(\sigma))_{[L^2(\Omega)]^2}\,d\sigma
  \\
  &=\int_{0}^{t_{{m}_t}}(\nabla\partial_tu_n(\sigma),\nabla\overline{u}_n(\sigma))_{[L^2(\Omega)]^2}\,d\sigma-\int_{t}^{t_{m_t}}(\nabla\partial_tu_n(\sigma),\nabla\overline{u}_n(\sigma))_{[L^2(\Omega)]^2}\,d\sigma
  \\
  &\geq\sum^{m_t}_{i=1}\frac{1}{2}(|\nabla u_n(t_i)|_{[L^2(\Omega)]^2}^2-|\nabla u_n(t_{i-1})|_{ [L^2(\Omega)]^2}^2)
  \\
  &\qquad-|\Omega|^{\frac{p-2}{2p}}|\nabla\overline{u}_n|_{L^\infty(0,T;[L^p(\Omega)]^2)}\int_{t}^{t_{m_t}}|\nabla\partial_tu_n(\sigma)|_{[L^2(\Omega)]^2}\,d\sigma
\end{align*}
\begin{align*}
  &\geq\frac{1}{2}(|\nabla u_n(t_{m_t})|_{[L^2(\Omega)]^2}^2-|\nabla u_n(0)|_{ [L^2(\Omega)]^2}^2)
  \\
  &\quad-\tau_n^\frac{1}{2}|\Omega|^{\frac{p-2}{2p}}|\nabla\overline{u}_n|_{L^\infty(0,T;[L^p(\Omega)]^2)}|\nabla\partial_tu_n|_{L^2(0,T;[L^2(\Omega)]^2)}
  \\
  &=\frac{1}{2}(|\nabla\overline{u}_n(t)|^2_{[L^2(\Omega)]^2}-|\nabla u_0|_{[L^2(\Omega)]^2}^2)
  \\
  & \quad -\tau_n^\frac{1}{2}|\Omega|^{\frac{p-2}{2p}}|\nabla\overline{u}_n|_{L^\infty(0,T;[L^p(\Omega)]^2)}|\nabla\partial_tu_n|_{L^2(0,T;[L^2(\Omega)]^2)},\mbox{ for $ n = 1, 2, 3, \dots $.}
\end{align*}
Hence, by \eqref{conv_7}, we obtain
\begin{align}
  &\liminf_{n\rightarrow\infty}\int_0^t(\nabla\partial_tu_n(\sigma),\nabla\overline{u}_n(\sigma))_{[L^2(\Omega)]^2}\,d\sigma
  \label{conv_15}
  \\
  &\quad\geq \frac{1}{2}(|\nabla{u}(t)|^2_{[L^2(\Omega)]^2}-|\nabla u_0|_{[L^2(\Omega)]^2}^2)=\int_{0}^{t}(\nabla\partial_tu(\sigma),\nabla u(\sigma))_{[L^2(\Omega)]^2}\,d\sigma.
  \nonumber
\end{align}
So, by putting $w=u$ in \eqref{conv_11} and using \eqref{conv_2}, \eqref{conv_5} and \eqref{conv_6}, we obtain that
\begin{align}
  &\limsup_{n\rightarrow\infty} 
  \biggl( 
    \frac{\nu}{p}\int_0^t|\nabla\overline{u}_n(\sigma)|_{[L^p(\Omega)]^2}^p\,d\sigma+\int_0^t\int_\Omega\gamma(R( \overline{\alpha}_n(\sigma))\nabla\overline{u}_n(\sigma))\,dxd\sigma
    \nonumber
    \\
   & \quad +\frac{\mu}{2}(|\nabla\overline{u}_n(t)|_{[L^2(\Omega)]^2}^2-|\nabla u_0|_{[L^2(\Omega)]^2}^2)
   \\
   &\qquad\qquad-\tau_n^\frac{1}{2}|\Omega|^\frac{p-2}{2p}|\nabla\overline{u}_n|_{L^\infty(0,T;[L^p(\Omega)]^2)}|\nabla\partial_tu_n|_{L^2(0,T;[L^2(\Omega)]^2)}
  \biggr)
  \nonumber
  \\
  &\leq \limsup_{n\rightarrow\infty} 
  \biggl( 
    \frac{\nu}{p}\int_0^t|\nabla\overline{u}_n(\sigma)|_{[L^p(\Omega)]^2}^p\,d\sigma+\int_0^t\int_\Omega\gamma(R( \overline{\alpha}_n(\sigma))\nabla\overline{u}_n(\sigma))\,dxd\sigma
    \nonumber
    \\
   & \hspace{15ex}
      +\mu\int_0^t(\nabla\partial_tu_n(\sigma),\nabla\overline{u}_n(\sigma))_{[L^2(\Omega)]^2}\,d\sigma
  \biggr)\label{conv_16}
  \\
  & \leq - \lim_{n\rightarrow\infty}\int_0^t (\partial_tu_n(\sigma)+\lambda(\overline{u}_n(\sigma)-u_{\mathrm{org}}),(\overline{u}_n-u)(\sigma))_{L^2(\Omega)}\,d\sigma 
  \nonumber
  \\
  &\quad+\frac{\nu}{p}\int_0^t|\nabla{u}(\sigma)|_{[L^p( \Omega )]^2}^p\,d\sigma+\lim_{n\rightarrow\infty}\int_0^t\int_\Omega\gamma(R(\overline{\alpha}_n(\sigma))\nabla u(\sigma))\,dxd\sigma
  \nonumber
  \\
  &\quad+\lim_{n\rightarrow\infty}\mu\int_0^t(\nabla\partial_tu_n(\sigma),\nabla u(\sigma))_{[L^2(\Omega)]^2}\,d\sigma
  \nonumber
  \\
  &=\frac{\nu}{p}\int_0^t|\nabla{u}(\sigma)|_{[L^p( \Omega )]^2}^p\,d\sigma+\int_{0}^t\int_\Omega\gamma(R({\alpha}(\sigma))\nabla{u}(\sigma))\,dxd\sigma
  \nonumber
  \\
  &\quad + \mu\int_0^t(\nabla\partial_tu(\sigma),\nabla u(\sigma))_{[L^2(\Omega)]^2}\,d\sigma
  \nonumber
  \\
  &=\frac{\nu}{p}\int_0^t|\nabla{u}(\sigma)|_{[L^p( \Omega )]^2}^p\,d\sigma+\int_{0}^t\int_\Omega\gamma(R({\alpha}(\sigma))\nabla{u}(\sigma))\,dxd\sigma
  \nonumber
  \\
  &\quad +\frac{\mu}{2}(|\nabla{u}(t)|^2_{[L^2(\Omega)]^2}-|\nabla u_0|_{[L^2(\Omega)]^2}^2).
  \nonumber
 \end{align}
 Therefore, from (Fact 1) in Section 1, \eqref{conv_12}, \eqref{conv_14}, \eqref{conv_15} and \eqref{conv_16}, we can derive the following convergences as $n\rightarrow\infty$:
 \begin{gather}
  \int_0^t|\nabla\overline{u}_n(\sigma)|_{[L^p(\Omega)]^2}^p\,d\sigma\rightarrow\int_0^t|\nabla{u}(\sigma)|_{[L^p(\Omega)]^2}^p\,d\sigma,
  \label{conv_17}
\\
\int_0^t(\nabla\partial_tu_n(\sigma),\nabla\overline{u}_n(\sigma))_{[L^2(\Omega)]^2}\,d\sigma\rightarrow\int_0^t(\nabla\partial_tu(\sigma),\nabla{u}(\sigma))_{[L^2(\Omega)]^2}\,d\sigma,~~~~~
\label{conv_18}
\\
|\nabla\overline{u}_n(t)|_{[L^2(\Omega)]^2}^2\rightarrow|\nabla{u}(t)|_{[L^2(\Omega)]^2}^2, \mbox{ for any $ t \in [0, T] $.}
  \nonumber%\label{conv_19}
\end{gather} 
Moreover, due to (A2), \eqref{f-ene0}, (B-6), and uniform convexity of $ L^2 $- and $ L^p $-based topologies, we can infer that:
\begin{align}
        &\overline{u}_n \to u \mbox{ and } \underline{u}_n \to u \mbox{ in } L^p(0, T; W_0^{1, p}(\Omega)),\label{conv_20}
        \\[-0.5ex]
        &\mbox{ with }  \bigl| |\nabla \overline{u}_n|_{L^p(0, T; [L^p(\Omega)]^2)}^p -|\nabla \underline{u}_n|_{L^p(0, T; [L^p(\Omega)]^2)}^p \bigr| \leq \frac{2 p E(\alpha^0,u_0)}{\nu} \tau_n \to 0,\nonumber
        \\[1.0ex]
        &\overline{u}_n(t) \to u(t) \mbox{ and } \underline{u}_n(t) \to u(t) \mbox{ in }  W_0^{1, p}(\Omega) , \mbox{ a.e. } t \in (0, T), \mbox{ as $ n \to \infty $.} \label{conv_21}
      \end{align}%\noeqref{conv_20,conv_21}
      In particular, 
      \begin{align}
        &\overline{u}_n(t) \to u(t) \mbox{ and } \underline{u}_n(t) \to u(t) \mbox{ in }  H_0^{1}(\Omega) , \mbox{ with }\label{conv_22}
        \\
        &\qquad\bigl| \nabla (\overline{u}_n -\underline{u}_n)(t)\bigr|_{[L^2(\Omega)]^2} \leq {\tau_n}^{\frac{1}{2}} |\nabla \partial_t u_n|_{L^2(0, T; [L^2(\Omega)]^2)} \to 0 ,\nonumber
        \\[1.0ex]
        &\bigl| \gamma(R(\overline{\alpha}_n(t)) \nabla \overline{u}_n(t)) -\gamma(R(\alpha(t)) \nabla u(t)) \bigr|_{L^1(\Omega)} \label{conv_23}
        \\
        &\quad\leq 4|\nabla \gamma|_{L^\infty(\R^2; \R^2)}|\nabla\overline{u}_n(t)|_{[L^2(\Omega)]^2}|(\overline{\alpha}_n-\alpha)(t)|_{L^2(\Omega)}\nonumber
        \\
        &\quad+\sqrt{2}|\nabla \gamma|_{L^\infty(\R^2; \R^2)}|\Omega|^{\frac{1}{2}}|\nabla (\overline{u}_n -u)(t)|_{[L^2(\Omega)]^2} \to 0,\nonumber
        \\[1ex]
        &\mbox{ as well as } \bigl| \gamma(R(\underline{\alpha}_n(t)) \nabla \underline{u}_n(t)) -\gamma(R(\alpha(t)) \nabla u(t)) \bigr|_{L^1(\Omega)} \to 0, \nonumber
        \\
        &\hspace{10ex}\mbox{for any $ t \in [0, T] $, as $ n \to \infty $.}\nonumber
\end{align}%\noeqref{conv_22,conv_23}
On the other hand, if we take $\omega=\alpha_n-\alpha$ in \eqref{conv_10}, then keeping in mind \eqref{conv_1}, \eqref{conv_3} and \eqref{conv_17}, we obtain that
\begin{align}
  &\limsup_{n\rightarrow\infty} 
  \left( 
    \frac{\kappa}{2}\int_0^t|\nabla\overline{\alpha}_n(\sigma)|_{[L^2(\Omega)]^2}^2\,d\sigma
  \right)
  \label{conv_24}
  \\
  &\leq \frac{\kappa}{2}\int_{0}^{t}|\nabla\alpha(\sigma)|^2_{[L^2(\Omega)]^2}\,d\sigma\nonumber
  \\
  &-\lim_{n\rightarrow\infty}\int_0^t\int_\Omega\nabla\gamma(R(\overline{\alpha}_n(\sigma))\nabla\overline{u}_n(\sigma))\cdot R\left(\overline{\alpha}_n(\sigma)+\ts\frac{\pi}{2}\right)\nabla \overline{u}_n(\sigma)(\overline{\alpha}_n-\alpha)(\sigma)\,dxd\sigma
  \nonumber
  \\
  &=\frac{\kappa}{2}\int_{0}^{t}|\nabla\alpha(\sigma)|^2_{[L^2(\Omega)]^2}\,d\sigma.\nonumber
 \end{align}
 Therefore, by (Fact 1) in Section 1, together with \eqref{conv_13} and \eqref{conv_24}, we obtain the following convergences as $n \rightarrow\infty$:
 \begin{align*}
  \int_0^t|\nabla\overline{\alpha}_n(\sigma)|_{[L^2(\Omega)]^2}^2\,d\sigma\rightarrow\int_0^t|\nabla{\alpha}(\sigma)|_{[L^2(\Omega)]^2}^2\,d\sigma, \mbox{ for any $t\in[0,T]$.}
\end{align*}
Moreover, owing to \eqref{f-ene0}, (B-6), and uniform convexity of $L^2$-based topologies, we can infer that:
\begin{align}
  &\overline{\alpha}_n\rightarrow \alpha \mbox{ and } \underline{\alpha}_n\rightarrow\alpha \mbox{ in }L^2(0,T;H^1_0(\Omega)), \label{conv_25}
  \\
  &\quad \mbox{ with } \bigl||\nabla\overline{\alpha}_n|^2_{[L^2(\Omega)]^2}-|\nabla\underline{\alpha}_n|^2_{[L^2(\Omega)]^2}\bigr|\leq \frac{2E(\alpha^0,u_0)}{\kappa}\tau_n\rightarrow0, \mbox{ as }n\rightarrow\infty.\nonumber
\end{align}
\eqref{conv_7}--\eqref{conv_9}, \eqref{conv_20}--\eqref{conv_23}, and \eqref{conv_25} imply 
\begin{equation}\label{conv_26}
  \left\{
    \begin{aligned}
      E_{\varepsilon_n}(\overline{\alpha}_n(t), \overline{u}_n(t)) \to \mathcal{J}_*(t) = E(\alpha(t), u(t)),
      \\
      E_{\varepsilon_n}(\underline{\alpha}_n(t), \underline{u}_n(t)) \to \mathcal{J}_*(t) = E(\alpha(t), u(t)), 
  \end{aligned}
  \mbox{ a.e. $ t \in (0, T) $, as $ n \to \infty $.}\right.
\end{equation}
{Now, setting $\omega=\varphi$ in $H^{1}_0(\Omega)$ in \eqref{conv_10} and $w=\psi$ in $W^{1,p}_0(\Omega)$ in \eqref{conv_11}, and taking into account \eqref{conv_1}, \eqref{conv_3}, \eqref{conv_4}, \eqref{conv_17} and \eqref{conv_18}, we obtain the following by letting $ n \rightarrow \infty $:}
\begin{align}
  &\int_I(\partial_tu(t),u(t)-\psi)_{L^2(\Omega)}\,dt+\lambda\int_I(u(t)-u_\mathrm{org},u(t)-\psi)_{L^2(\Omega)}\,dt
  \nonumber
  \\
  &\quad+\mu\int_I(\nabla\partial_tu(t),\nabla(u(t)-\psi))_{[L^2(\Omega)]^2}\,dt
  \nonumber
  \\
  &\quad+\nu\int_I\int_{\Omega}|\nabla u(t)|^{p-2}\nabla u(t)\cdot\nabla(u(t)-\psi)\,dxdt
  \nonumber
  \\
  &\quad+\int_I\int_{\Omega}\gamma(R(\alpha(t))\nabla u(t))\,dxdt\leq\int_I\int_{\Omega}\gamma (R(\alpha(t))\nabla\psi)\,dxdt,
  \label{conv_27}
\end{align}
for any open interval $I\subset(0,T)$. Also, using \eqref{conv_2}, \eqref{conv_5}, and \eqref{conv_17}, we let $ n \rightarrow \infty $ and obtain that 
\begin{equation}\label{conv_28}
  \kappa\int_I(\nabla{\alpha}(t),\nabla\varphi)_{[L^2(\Omega)]^2} \,dt
  + \int_I\int_\Omega\bm{w}^*(t)\cdot R\left({\alpha}(t)+\ts\frac{\pi}{2}\right)\nabla{u}(t)\varphi\,dxdt= 0,
\end{equation}%\noeqref{conv_28}
for any open interval $I\subset(0,T)$. Furthermore, by using \eqref{subdig1}, \eqref{subdig2} and \eqref{conv_3}, (Fact 3) in Remark \ref{rem2}, one can say that
\begin{align*}
  \bm{w}^*=\partial\widehat{\Phi}^I(R(\alpha)\nabla u)\mbox{ in }L^2(0,T;[L^2(\Omega)]^2),
\end{align*}
and hence,
\begin{equation}\label{subdig3}
  \begin{aligned}
    &\bm{w}^*=\partial\Phi(R(\alpha)\nabla u) \mbox{ in } [L^2(\Omega)]^2, \mbox{ for a.e. }t\in(0,T),\mbox{ and }
    \\
    &\bm{w}^*=\nabla\gamma(R(\alpha)\nabla u) \mbox{ in }\R^2, \mbox{ a.e. in } Q.
  \end{aligned}
 \end{equation}
 From \eqref{conv_27}--\eqref{subdig3}, it follows that the pair $[\alpha,u]$ fulfills (S1) and (S2).

 Next, we consider the energy-inequality. From \eqref{f-ene0}, it is derived that 
 \begin{gather}
  \frac{1}{2}\int_{t_{i-1}}^{t_i}\biggl(|\partial_tu_n(\sigma)|^2_{L^2(\Omega)}+{\mu}|\nabla\partial_tu_n(\sigma)|^2_{[L^2(\Omega)]^2}\biggr)\,d\sigma+E(\overline{\alpha}_n(t),\overline{u}_n(t))\nonumber
  \\
  \leq E(\underline{\alpha}_n(t), \underline{u}_n(t)),~~\mbox{ for }t\in[t_{i-1},t_i),~i=1,2,\dots,{\ts\frac{T}{\tau_n}},\mbox{ and }n\in\N.
  \label{energy1}
\end{gather}
By defining $m^s:=[\frac{s}{\tau}]$ and $m_t:=\bigl([\frac{t}{\tau}]+1\bigr)\land \frac{T}{\tau}$ for $0\leq s< t\leq T$, and summing both sides of \eqref{energy1} for $i=m^s+1, m^s+2,\dots,m_t$, we deduce that
\begin{align}
  &\frac{1}{2}\int_s^t \left(|\partial_tu_n(\sigma)|^2_{L^2(\Omega)}+{\mu}|\nabla\partial_tu_n(\sigma)|^2_{[L^2(\Omega)]^2} \right)\,d\sigma+E_{\varepsilon_n}(\overline{\alpha}_n(t),\overline{u}_n(t))
  \nonumber
  \\
  &\leq \frac{1}{2}\int_{m^s\tau_n}^{m_t\tau_n} \left(|\partial_tu_n(\sigma)|^2_{L^2(\Omega)}+{\mu}|\nabla\partial_tu_n(\sigma)|^2_{[L^2(\Omega)]^2} \right)\,d\sigma+E_{\varepsilon_n}(\overline{\alpha}_n(t),\overline{u}_n(t))
  \nonumber
  \\
  &\leq E_{\varepsilon_n}(\underline{\alpha}_n(s),\underline{u}_n(s)), \mbox{ for }s,t\in[0,T];s\leq t,\mbox{ and }n\in\N.\label{energy2}
\end{align}
Now, thanks to \eqref{conv_1}, \eqref{conv_2}, \eqref{conv_26} and \eqref{energy2}, by letting $n\rightarrow\infty$, we obtain that:
\begin{gather}
  \frac{1}{2}\int_{s}^{t}|\partial_tu(\sigma)|^2_{L^2(\Omega)}\,d\sigma+\frac{\mu}{2}\int_{s}^{t}|\nabla\partial_tu(\sigma)|^2_{[L^2(\Omega)]^2}\,d\sigma
  +E(\alpha(t), u(t)) \leq E(\alpha(s), u(s)),
  \nonumber
  \\
  \mbox{ for a.e. $ s \in [0, T) $ including $s = 0$, and a.e. $ t \in (s, T) $.}\label{energy3}
\end{gather}
Furthermore, we can substitute the phrase ``a.e. $ t \in (s, T) $'' in \eqref{energy3} with ``for any $ t \in [s, T] $''. In fact, let us consider a sequence $\{t_n\}_{n\in\N}\subset (t,T)$ such that $t_n \rightarrow t$,
\begin{gather}
  \frac{1}{2}\int_{s}^{t_n}|\partial_tu(\sigma)|^2_{L^2(\Omega)}\,d\sigma+\frac{\mu}{2}\int_{s}^{t_n}|\nabla\partial_tu(\sigma)|^2_{[L^2(\Omega)]^2}\,d\sigma \nonumber
  \\
  +E(\alpha(t_n), u(t_n)) \leq E(\alpha(s), u(s)),\mbox{ for all $ n\in\N $.}\label{energy4}
\end{gather}
Considering the lower semi-continuity of $E(\alpha,u)$ on $[L^2(\Omega)]^2$ and the convergence $[\alpha(t_n),$ $u(t_n)] \rightarrow [\alpha(t),u(t)]$ in $[L^2(\Omega)]^2$, taking the lower limit of both sides of \eqref{energy4} leads to the following:
\begin{align}
  &\frac{1}{2}\int_{s}^{t}|\partial_tu(\sigma)|^2_{L^2(\Omega)}\,d\sigma+\frac{\mu}{2}\int_{s}^{t}|\nabla\partial_tu(\sigma)|^2_{[L^2(\Omega)]^2}\,d\sigma +E(\alpha(t), u(t))
    \nonumber
    \\
  \leq& \liminf_{n\rightarrow\infty}\Biggl(\frac{1}{2}\int_{s}^{t_n}|\partial_tu(\sigma)|^2_{L^2(\Omega)}\,d\sigma+\frac{\mu}{2}\int_{s}^{t_n}|\nabla\partial_tu(\sigma)|^2_{[L^2(\Omega)]^2}\,d\sigma+E(\alpha(t_n), u(t_n)) \Biggr)
  \nonumber
  \\
  \leq& E(\alpha(s), u(s)). \label{energy5}
\end{align}
In addition, since the uniqueness of solutions to (S) has been established in Subsection \ref{sub4.1}, the energy-inequality \eqref{energy3} holds for all $0 \leq s \leq t \leq T$. More precisely, for the unique solution $[\alpha,u] \in [L^2(0,T;L^2(\Omega))]^2$ to (S) and any $s \in [0,T)$, the uniqueness implies that \eqref{energy3} holds for almost every $\widetilde{s} \in (s,T)$, including $\widetilde{s}=s$, and all $t \in [\widetilde{s},T]$. This follows by applying the time-discretization argument in Section 3 with initial data $[\alpha(s),u(s)] \in H_0^1(\Omega)\times W_0^{1,p}(\Omega)$.

\subsection{Verification of Boundedness: $0\leq u \leq 1$ in $\overline{Q}$}\label{sub4.4}
In this subsection, we establish that {$0 \leq u\leq 1$ in $\overline{Q}$}. Set $\psi= u(t) + [-u(t)]^+$ in (S2). Based on (A1) and (A2), it follows that
\begin{align}
  &\frac{1}{2}\frac{d}{dt}(|[-u(t)]^+|^2_{L^2(\Omega)}+\mu|\nabla [-u(t)]^+|^2_{[L^2(\Omega)]^2})
  +\lambda(u_\mathrm{org},[-u(t)]^+)_{L^2(\Omega)}
  \nonumber
  \\
  &\quad+\int_{\Omega}\gamma(R(\alpha(t))\nabla u(t))\,dx
  \leq\int_{\Omega}\gamma (R(\alpha(t))\nabla[-u(t)]^-)\,dx,\mbox{ a.e. } t \in (0,T).
  \label{ugeq01}
\end{align}
Here, since
\begin{align*}
  \int_{\Omega}\gamma (R(\alpha(t))\nabla[-u(t)]^-)\,dx
  &= \int_{\{u\geq0\}}\gamma (R(\alpha(t))\nabla u(t))\,dx 
  \\
  &\leq\int_{\Omega}\gamma (R(\alpha(t))\nabla u(t))\,dx,
\end{align*}
we derive 
\begin{align}\label{-uve+}
  \frac{d}{dt}\bigl(|[-u(t)]^+|^2_{L^2(\Omega)}+\mu|\nabla [-u(t)]^+|^2_{[L^2(\Omega)]^2}\bigr)\leq0,\mbox{ a.e. } t \in (0,T).
\end{align}
By using Gronwall's lemma to \eqref{-uve+}, it follows that
\begin{align}
  &|[-u(t)]^+|^2_{L^2(\Omega)}+\mu|\nabla [-u(t)]^+|^2_{[L^2(\Omega)]^2}\nonumber
  \\
  &\quad \leq|[-u_0]^+|^2_{L^2(\Omega)}+\mu|\nabla [-u_0]^+|^2_{[L^2(\Omega)]^2}=0,\nonumber
  \\
  &\mbox{ for all }t\in[0,T], \mbox{ which implies $ u \geq 0 $ a.e. in $ Q $.}\label{dai01}
\end{align}

Secondly, by setting $\psi= u\land 1$ in (S2) and observing that $u-(u\land1)=[u-1]^+$, we find that 
\begin{align}
  &(\partial_tu(t),[u-1]^+(t))_{L^2(\Omega)}+\lambda(u(t)-u_\mathrm{org},[u-1]^+(t))_{L^2(\Omega)}
  \nonumber
  \\
  &~~+\mu(\nabla\partial_tu(t),\nabla[u-1]^+(t))_{[L^2(\Omega)]^2}+\nu\int_{\Omega}|\nabla u(t)|^{p-2}\nabla u(t)\cdot\nabla[u-1]^+(t)\,dx
  \nonumber
  \\
  &~~+\int_{\Omega}\gamma(R(\alpha(t))\nabla u(t))\,dx\leq\int_{\Omega}\gamma (R(\alpha(t))\nabla(u\land1)(t))\,dx.
  \label{u-101}
\end{align}
Here, since
\begin{align*}
  &\int_{\Omega}\gamma (R(\alpha(t))\nabla(u\land1)(t))\,dx=\int_{\{u\leq1\}}\gamma (R(\alpha(t))\nabla u(t))\,dx
  \\
  &\hspace*{28ex}\leq\int_{\Omega}\gamma (R(\alpha(t))\nabla u(t))\,dx,
  \\[1ex]
  &\partial_tu=\partial_t(u-1),\mbox{ and }\nabla u=\nabla(u-1),
    \\[1ex]
  &1-u_{\mathrm{org}}\geq0\mbox{ a.e. in }\Omega, \mbox{ i.e., }(1-u_{\mathrm{org}},[u-1]^+(t))_{L^2(\Omega)}\geq0,
    \\[1ex]
  &\nu\int_{\Omega}|\nabla u(t)|^{p-2}\nabla u(t)\cdot\nabla[u-1]^+(t)\,dx\geq \nu\int_{\{u\geq1\}}|\nabla(u-1)|^p\,dx \geq0,
\end{align*}
we derive 
\begin{equation}\label{1-uve+}
  \frac{d}{dt}\bigl(|[u(t)-1]^+|_{L^2(\Omega)}^2+\mu|\nabla[u(t)-1]^+|_{[L^2(\Omega)]^2}^2\bigr)\leq0,\mbox{ a.e. } t \in (0,T).
\end{equation}
By using Gronwall's lemma to \eqref{1-uve+}, it follows that
\begin{align}
  &|[u(t)-1]^+|_{L^2(\Omega)}^2+\mu|\nabla[u(t)-1]^+|_{[L^2(\Omega)]^2}^2\nonumber
  \\
  &\qquad\leq|[u_0-1]^+|_{L^2(\Omega)}^2+\mu|\nabla[u_0-1]^+|_{[L^2(\Omega)]^2}^2=0,\nonumber
  \\
  &\mbox{ for all }t\in[0,T], \mbox{ which implies $ u \leq 1 $ a.e. in $ Q $.}\label{dai02}
\end{align}
By integrating \eqref{dai01}, \eqref{dai02}, and utilizing the continuity property $u\in C(\overline{Q})$, we deduce that $0\leq u\leq1 $ in $\overline{Q}$. 
\medskip

Thus, by assuming $\kappa_*$ to be $\widehat{\kappa}$, all assertions of Main Theorem follow.
\qed

%%%%%%%%%%%%%%%%%%%%%%%%%%%%%%%%%%%%%%%%%%%%%%%%%%%%%%
%          7. REFERENCES SECTION
%%%%%%%%%%%%%%%%%%%%%%%%%%%%%%%%%%%%%%%%%%%%%%%%%%%%%%

%       READ THIS SECTION CAREFULLY

% Each of the references below MUST be cited in your article above. Do not include references that are not cited in your article.

% Follow the examples below carefully. We strongly suggest that you copy and paste your reference information directly into our examples.

% List all references in alphabetical order according to the first author's last name.

% Verify each URL works correctly and can be accessed properly. Your URL links should be to reputable websites. The command line for a website link begins with: \url{ }

% Do not add MR or DOI numbers to your references. AIMS production staff will add this information.

% Using BibTex is not recommended but can be handled.

\end{document}